\theoremstyle{plain}
\newtheorem{theorem}{Theorem}[section]
\newtheorem{lemma}[theorem]{Lemma}
\newtheorem{corollary}[theorem]{Corollary}
\theoremstyle{definition}
\newcommand*\colvec[3][]{
    \begin{pmatrix}\ifx\relax#1\relax\else#1\\\fi#2\\#3\end{pmatrix}
}
\newcommand{\ba}{\mathbf{a}}
\newcommand{\bb}{\mathbf{b}}
\newcommand{\bc}{\mathbf{c}}
\newcommand{\bd}{\mathbf{d}}
\newcommand{\sigmathetasq}{\sigma_x^2\cos^2\theta+2\rho\sigma_x\sigma_y\cos\theta\sin\theta+\sigma_y^2\sin^2\theta}
\title{An Analysis of Dual-Issue Final-Offer Arbitration}
\author{Brian Powers}
\date{\today}
\begin{document}
\maketitle

\doublespacing

\begin{abstract}
We discuss final-offer arbitration where two quantitative issues are in dispute and model it as a zero-sum game. Under reasonable assumptions we both derive a pure strategy pair and show that it is both a local equilibrium and furthermore that it is the unique global equilibrium. 
\end{abstract}

\section{Introduction}
Should negotiating parties fail to arrive at an agreeable solution, \textit{arbitration} serves as a mechanism whereby a binding resolution may be reached. In \textit{conventional arbitration (CA)}, the disputing parties submit their cases to an agreed upon arbiter who has full power to craft whatever fair and just settlement he sees fit. It is widely accepted, however, that CA has a number of undesirable properties, in particular what has been called the ``chilling effect'': since both parties know the arbiter will craft a compromise, they tend to take extreme positions. Since it is commonly held that a settlement reached through negotiation is preferable to a settlement reached through arbitration, one can view the purpose of a compulsory arbitration as motivating the parties to reach an agreement during negotiations. This is the paradox of arbitration: the best arbitration mechanism is that which is used least often.

It was Stevens~\citeyearpar{stevens1966} who suggested a simple arbitration mechanism now known as \textit{Final-Offer Arbitration (FOA)}. In FOA, the arbiter must select one of the \textit{final offers} submitted by the parties and has no prerogative to craft a compromise settlement. The theory was that such uncertainty in the final outcome would combat this chilling effect driving the two parties to make final offers that are ``close'' to one another, or better still motivate them to reach agreement during negotiations. 

Since 1975 when FOA was adopted by Major League Baseball for salary disputes, variants of FOA have been used in various states in public sectors where labor does not have the right to strike (e.g. police, firefighters). A growing body of literature has been developed by legal scholars, economists and game theorists studying both the theoretical and empirical properties of FOA.

The first theoretical model of FOA was introduced by Crawford~\citeyearpar{crawford1979}. With the assumption that both parties know with certainty the arbiter's opinion of a ``fair'' settlement, he showed that FOA would inevitably lead to the same outcome as conventional arbitration. Farber~\citeyearpar{farber1980}, Chatterjee~\citeyearpar{chatterjee1981}, and Brams and Merrill~\citeyearpar{brams1983} independently developed game theoretic models of single-issue FOA for which players are uncertain of the arbiter's behavior. Farber studied the effect of risk aversion by one of the parties, and derived the strategy pair which in many cases is a Nash equilibrium. Chatterjee and Brams and Merrill model the game as zero-sum and consequently assumed both parties are risk-neutral. Brams and Merrill provide sufficient conditions for the existence of a pure equilibrium.
In all three models, the arbiter is assumed by the players to choose a ``fair'' settlement from a probability distribution commonly known to both players and select whichever player's offer is closest in absolute value. Kilgour~\citeyearpar{kilgour1994} studied the game theoretic properties of FOA and extended the Brams-Merrill model to allow for risk-aversion on the part of the players. Dickinson~\citeyearpar{dickinson2006} further showed that optimism on the part of the players, in the form of a biased prior distribution, drives the final-offers apart.

If multiple issues are in dispute, FOA has been primarily implemented in two ways \citep{stern1975}. Under \textit{Issue-by-Issue FOA (IBIFOA)}, the arbiter may craft a compromise of sorts from the two parties' offers by choosing some components from one and some from the other. Alternatively, \textit{Whole Package FOA (WPFOA)} requires that the arbiter select one offer in its entirety. A multi-issue model of FOA was first discussed by Crawford~\citeyearpar{crawford1979} and further developed by Wittman~\citeyearpar{wittman1986}. Here the main concern was the existence of a Nash equilibrium under various assumptions. Wittman was also able to show in his model that increased risk-aversion leads a player to make a less extreme final-offer.  Olson~\citeyearpar{olson1992} discussed how the single-issue model does not accurately reflect arbiter behavior when more than one issue is in dispute. 

In his initial paper introducing FOA, Stevens cautions against the use of the ``Whole Package'' variant, stating that ``such a system would run the danger of generating unworkable awards...the arbitration authority might be forced to choose between two extreme positions, each of which was unworkable''\citep{stevens1966}. Tulis~\citeyearpar{tulis2013} elaborates: ``One common criticism of package final-offer arbitration is that parties may be tempted to include outrageous offers.'' He further claims that ``issue-by-issue final offers...are more aligned with the objectives of final-offer arbitration.'' We argue the opposite - that both players' optimal strategy in a multiple-issue FOA is to make all final-offers reasonable. Furthermore, the additional variance in the awards from WP, as opposed to IBI, acts as a greater motivator for the parties to reach agreement during negotiations. We show this by extending the model of Brams and Merrill to multiple-issues and proceed to explicitly construct a pure strategy pair, proving it is the unique optimal strategy pair.

\section{Dual-Issue Final-Offer Arbitration}
Our model extends the model defined by Brams and Merrill~\citeyearpar{brams1983}. Let Player I be the minimizer and Player II the maximizer in this zero-sum game. Let us consider the case where each player makes not a single valued offer, but an ordered pair $(x_i, y_i)$, $i=1,2$. For this model, we assume that the two issues in dispute are quantitative and valued identically by the players. An example of such a situation is one in which wage and workers compensation amounts are in dispute; workers' compensation may be valued at the \emph{expected} compensation amount (in the probabilistic sense). Even issues which are not monetary, such as number of sick days, may have a straightforward monetary valuation by the parties. We will assume that these issues are positively correlated across the industry. Let us further assume that both players are restricted to a strategy space $S$ which is an arbitrarily large, \emph{compact} subset of $\mathbb{R}_2$. Both players are uncertain of the arbiter's opinion of a fair settlement $(\xi,\eta)$, but assume that the arbiter (or a fact-finder) is sampling from relevant industry data to form an opinion. Thus, by the Central Limit Theorem, we suppose that their common prior distribution for $(\xi,\eta)$ is a bivariate normal distribution, $N(\mathbf{\mu},\mathbf{\Sigma})$ and it is common knowledge (Aumann). Let us assume without loss of generality that $\mathbf{\mu}=\mathbf{0}$. Also let 
$$\mathbf{\Sigma}=\left[
\begin{array}{cc}
\sigma_x^2& \rho\sigma_x\sigma_y\\
\rho\sigma_x\sigma_y & \sigma_y^2
\end{array}\right],$$ where $\rho > 0$. 

In the multi-issue case, FOA is typically handled in one of two ways: Issue-by-Issue (IBI) or Whole-Package (WP). Under IBIFOA the arbiter rules independently on each issue presented. A compromise of sorts may be crafted in this way. If the arbiter uses the IBI mechanic,  the players are engaged in two independently decided single-issue FOA games. By the Brams-Merrill Theorem~\citeyearpar{brams1983}, we know that the unique optimal strategy pair of the players is given by
\begin{equation}
(x_1^*,y_1^*)=\left(-\frac{\sigma_x\sqrt{2\pi}}{2},-\frac{\sigma_y\sqrt{2\pi}}{2} \right) \quad (x_2^*,y_2^*)=\left(\frac{\sigma_x\sqrt{2\pi}}{2},\frac{\sigma_y\sqrt{2\pi}}{2} \right). \label{IBIFOAsol}
\end{equation}

Under WPFOA the arbiter must rule in favor of one final-offer vector in its entirety. It is in this variant that the choice of a distance criterion needs to be chosen by the arbiter. The ``distance'' from a final-offer point $(x_i,y_i)$ to $(\xi,\eta)$ may be determined in a number of ways\footnote{Many other distance concepts are reasonable and worthy of consideration. These include absolute total difference, any $L_p$ metric, Mahalanobis distance, or standardized distance. These are considered by the author in detail elsewhere.}. For this model we assume it is common knowledge that the arbiter uses Euclidean distance, or an $L_2$ norm:
\begin{equation}D_{L_2}\big((x,y),(\xi,\eta)\big)=\sqrt{(\xi-x)^2+(\eta-y)^2}.\end{equation}

\section{Properties of Dual-Issue FOA under $L_2$ Distance}
We now establish some properties of the game. Suppose Player I chooses pure strategy $\ba=(x_1,x_2)$ and Player II chooses pure strategy $\bb=(x_2,y_2)$, and the arbiter considers $(\xi,\eta)$ a fair settlement. We define $C_i(\ba,\bb)$,  as the set of points in $\mathbb{R}_2$ which are strictly closer to Player $i$'s final-offer than to the other player's, namely
\begin{equation}\label{eqC1}
C_1(\ba,\bb) := \left \{ (x,y) : (x_1-x)^2 + (y_1-y)^2 < (x_2-x)^2 + (y_2-y)^2 \right\},
\end{equation}
\begin{equation}\label{eqC2}
C_2(\ba,\bb) := \left \{ (x,y) : (x_1-x)^2 + (y_1-y)^2 > (x_2-x)^2 + (y_2-y)^2 \right\}.
\end{equation}
It is immediately apparent that $C_1(\ba,\bb) = C_2(\bb,\ba)$.
The midset is
\begin{equation}\label{eqMid}
Mid(\ba,\bb) := \left \{ (x,y) : (x_1-x)^2 + (y_1-y)^2 = (x_2-x)^2 + (y_2-y)^2 \right\}.
\end{equation}
We observe that if $\ba \not=\bb$ then $Mid(\ba,\bb)$ is a line so $P\big( (\xi,\eta) \in Mid(\ba,\bb)\big)=0$. We can now define the expected payoff to Player II from I
\begin{equation}\label{eqPayoff}
K(\ba,\bb) = 
\begin{cases} 
x_1+y_1 & \ba=\bb\\
(x_1+y_1)P\big( (\xi,\eta)\in C_1(\ba,\bb) \big) + (x_2+y_2)P\big( (\xi,\eta) \in C_2(\ba,\bb)\big) & \ba \not=\bb
\end{cases}
\end{equation}

% \begin{lemma}\label{lemKcontinuous}
% The expected payoff function $K(\ba,\bb)$ is continuous.
% \end{lemma}
The first property is \textbf{anonymity of final-offers}; the arbiter essentially does not care which player submits which final-offer.
\begin{lemma}\label{lemAnonymous}
$K( \ba,\bb) = K(\bb, \ba)$.
\end{lemma}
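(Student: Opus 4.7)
The plan is to prove the identity by direct computation from the definition of $K$ in (\ref{eqPayoff}), leveraging the symmetry observation already recorded in the text that $C_1(\ba,\bb) = C_2(\bb,\ba)$ (and hence, by swapping, $C_2(\ba,\bb) = C_1(\bb,\ba)$).

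First I would dispose of the degenerate case $\ba = \bb$. Here both arguments are identical, so swapping them has no effect, and $K(\ba,\ba) = K(\bb,\bb)$ is immediate from the top branch of (\ref{eqPayoff}).

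For the generic case $\ba \ne \bb$, I would write out $K(\ba,\bb)$ directly from the second branch of (\ref{eqPayoff}), with $\ba = (x_1,y_1)$ and $\bb = (x_2,y_2)$. Then I would write out $K(\bb,\ba)$ by applying the same formula, but now the role of ``Player I's offer'' is played by $\bb = (x_2,y_2)$ and ``Player II's offer'' by $\ba = (x_1,y_1)$. This yields
\[
K(\bb,\ba) = (x_2+y_2)\,P\!\left((\xi,\eta)\in C_1(\bb,\ba)\right) + (x_1+y_1)\,P\!\left((\xi,\eta)\in C_2(\bb,\ba)\right).
\]
Substituting $C_1(\bb,\ba) = C_2(\ba,\bb)$ and $C_2(\bb,\ba) = C_1(\ba,\bb)$ in this expression, the coefficients $(x_1+y_1)$ and $(x_2+y_2)$ match exactly the right probabilities appearing in $K(\ba,\bb)$, so the two expressions coincide.

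There is no real obstacle here: the lemma is essentially a restatement of the fact that the arbiter's decision rule treats the two submitted offers symmetrically, so the labels ``Player I'' and ``Player II'' can be exchanged in the payoff without effect. The only care required is careful bookkeeping so that each $(x_i+y_i)$ coefficient ends up multiplied by the probability of the Voronoi cell containing its own final-offer, both before and after the swap.
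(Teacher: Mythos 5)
Your proposal is correct and follows essentially the same route as the paper: both proofs handle the trivial case $\ba=\bb$ and then use the identities $C_1(\ba,\bb)=C_2(\bb,\ba)$ and $C_2(\ba,\bb)=C_1(\bb,\ba)$ to match each coefficient $(x_i+y_i)$ with the probability of its own Voronoi cell after the swap. The only cosmetic difference is that you expand $K(\bb,\ba)$ and reduce it to $K(\ba,\bb)$, whereas the paper starts from $K(\ba,\bb)$ and rewrites it as $K(\bb,\ba)$.
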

\begin{proof}
If $\ba=\bb$ the proof is trivial. Assume $\ba \not= \bb$.
\begin{alignat*}{2}
K(\ba,\bb) &= (x_1+y_1)P\big( (\xi,\eta)\in C_1(\ba,\bb) \big) + (x_2+y_2)P\big( (\xi,\eta) \in C_2(\ba,\bb)\big)  \\
&= (x_1+y_1)P\big( (\xi,\eta)\in C_2(\bb,\ba) \big) + (x_2+y_2)P\big( (\xi,\eta) \in C_1(\bb,\ba)\big)\\
&= (x_2+y_2)P\big( (\xi,\eta) \in C_1(\bb,\ba)\big) + (x_1+y_1)P\big( (\xi,\eta)\in C_2(\bb,\ba) \big)\\
&= K(\bb,\ba)
\end{alignat*}
\end{proof}
The next property is due to the symmetry of the bivariate normal distribution about $(0,0)$.
\begin{lemma} \label{lemNegativeK}
Let $-\ba = (-x_1,-y_1)$ and $-\bb=(-x_2,-y_2)$. Then $K(-\ba,-\bb) = -K(\ba,\bb)$.
\end{lemma}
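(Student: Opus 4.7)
My plan is to exploit two distinct symmetries: a geometric symmetry of the cells $C_1,C_2$ under negation of both final-offer points, and the distributional symmetry of the mean-zero bivariate normal prior on $(\xi,\eta)$. First I would dispose of the degenerate case $\ba=\bb$: here $-\ba=-\bb$, so by the first branch of \eqref{eqPayoff} we have $K(-\ba,-\bb) = -x_1-y_1 = -K(\ba,\bb)$ immediately.

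For the main case $\ba\neq\bb$, the first step is the geometric claim that $C_i(-\ba,-\bb) = -C_i(\ba,\bb)$, where $-C_i$ denotes the pointwise reflection through the origin. This follows directly from the defining inequalities in \eqref{eqC1}--\eqref{eqC2}: substituting $(-x_1,-y_1)$ and $(-x_2,-y_2)$ into, say, the condition for $C_1$ yields $(-x_1-x)^2 + (-y_1-y)^2 < (-x_2-x)^2 + (-y_2-y)^2$, which after expansion is precisely the $C_1(\ba,\bb)$ condition evaluated at $(-x,-y)$. The second step is the probabilistic observation that because $\mathbf{\mu}=\mathbf{0}$, the random vector $(\xi,\eta)$ and its negation $(-\xi,-\eta)$ have the same $N(\mathbf{0},\mathbf{\Sigma})$ distribution, so for any (measurable) set $A\subset\mathbb{R}^2$ we have $P((\xi,\eta)\in A) = P((\xi,\eta)\in -A)$.

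Combining the two, $P((\xi,\eta)\in C_i(-\ba,-\bb)) = P((\xi,\eta)\in -C_i(\ba,\bb)) = P((\xi,\eta)\in C_i(\ba,\bb))$. Substituting into the second branch of \eqref{eqPayoff} then yields
\begin{align*}
K(-\ba,-\bb) &= (-x_1-y_1)P\bigl((\xi,\eta)\in C_1(-\ba,-\bb)\bigr) + (-x_2-y_2)P\bigl((\xi,\eta)\in C_2(-\ba,-\bb)\bigr) \\
&= -(x_1+y_1)P\bigl((\xi,\eta)\in C_1(\ba,\bb)\bigr) - (x_2+y_2)P\bigl((\xi,\eta)\in C_2(\ba,\bb)\bigr) \\
&= -K(\ba,\bb),
\end{align*}
completing the proof.

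I don't anticipate a serious obstacle here; the entire content is bookkeeping around two symmetries. The only point that requires care is the geometric identity $C_i(-\ba,-\bb) = -C_i(\ba,\bb)$, and even this reduces to observing that the midset $Mid(\ba,\bb)$ depends only on the pair of points up to rigid motions, so negating all four coordinates reflects the half-planes (or their higher-dimensional analogues in the degenerate geometry) through the origin rather than distorting them.
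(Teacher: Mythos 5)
Your proof is correct and follows essentially the same route as the paper's: both rest on the equivalence $(x,y)\in C_i(-\ba,-\bb) \Leftrightarrow (-x,-y)\in C_i(\ba,\bb)$ together with the fact that $(\xi,\eta)$ and $(-\xi,-\eta)$ are identically distributed under the mean-zero normal prior. The only difference is cosmetic (you phrase the symmetry as $C_i(-\ba,-\bb)=-C_i(\ba,\bb)$ and you spell out the trivial case $\ba=\bb$, which the paper omits).
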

\begin{proof}
This proof makes use of two facts: First, $(\xi,\eta)\in C_i(\ba,\bb) \Leftrightarrow (-\xi,-\eta) \in C_i(-\ba,-\bb)$, $i=1,2$. Secondly, $(\xi,\eta)$ and $(-\xi,-\eta)$ follow the same probability distribution.
\begin{alignat*}{2}
K(-\ba,-\bb) &= (-x_1-y_1)P\big( (\xi,\eta)\in C_1(-\ba,-\bb) \big) + (-x_2+-y_2)P\big( (\xi,\eta) \in C_2(-\ba,-\bb)\big)  \\
&= -\Big((x_1+y_1)P\big( (\xi,\eta)\in C_1(-\ba,-\bb) \big) + (x_2+y_2)P\big( (\xi,\eta) \in C_2(-\ba,-\bb)\big) \Big)  \\
&= -\Big((x_1+y_1)P\big( (-\xi,-\eta)\in C_1(\ba,\bb) \big) + (x_2+y_2)P\big( (-\xi,-\eta) \in C_2(\ba,\bb)\big) \Big)  \\
&= -\Big((x_1+y_1)P\big( (\xi,\eta)\in C_1(\ba,\bb) \big) + (x_2+y_2)P\big( (\xi,\eta) \in C_2(\ba,\bb)\big) \Big)  \\
&= -K(\ba,\bb)
\end{alignat*}
\end{proof}
Next we show that if the players play opposite pure strategies, the expected payoff of the game is zero.
\begin{lemma}\label{lemOppositePure}
Let $-\bb=(-x_2,-y_2)$. Then $K(-\bb,\bb)=0$.
\end{lemma}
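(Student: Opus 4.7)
The plan is to derive this directly from the two preceding lemmas rather than from any fresh probability computation; no integration or analysis of the cells $C_i$ should be needed. The key observation is that the configuration $(-\bb, \bb)$ is symmetric in two different senses: it is invariant (as an unordered pair of offers) under swapping the two players, which is the content of Lemma~\ref{lemAnonymous}, and it is also invariant under negation of both offers, which lets us apply Lemma~\ref{lemNegativeK}. Combining these two invariances will force $K(-\bb,\bb)$ to equal its own negative.

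First I would dispose of the degenerate case $\bb = \mathbf{0}$: then $-\bb = \bb$, so the top branch of the definition of $K$ in \eqref{eqPayoff} gives $K(-\bb,\bb) = 0 + 0 = 0$ immediately. From here on assume $\bb \neq \mathbf{0}$ so that $-\bb \neq \bb$ and only the second branch of \eqref{eqPayoff} is in play.

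Next I would apply Lemma~\ref{lemNegativeK} with the roles of $\ba$ and $\bb$ taken by $-\bb$ and $\bb$ respectively: that gives
\begin{equation*}
K(\bb,-\bb) \;=\; K\bigl(-(-\bb),\,-\bb\bigr) \;=\; -K(-\bb,\bb).
\end{equation*}
Then I would apply Lemma~\ref{lemAnonymous} with $\ba=-\bb$ to obtain $K(-\bb,\bb) = K(\bb,-\bb)$. Chaining these two equalities yields $K(-\bb,\bb) = -K(-\bb,\bb)$, hence $2K(-\bb,\bb)=0$ and the claim follows.

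There is no real obstacle here; the only subtlety is making sure the two lemmas are applied to legitimate inputs (in particular that we are not accidentally forced into the $\ba=\bb$ branch of $K$), which is why the $\bb=\mathbf{0}$ case is treated separately at the start. Everything else is symbol pushing.
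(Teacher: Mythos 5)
Your proof is correct, but it takes a different route from the paper's. The paper proves Lemma~\ref{lemOppositePure} by a direct computation: it factors out $(x_2+y_2)$, then uses the point symmetry of the distribution together with the identities $C_1(-\bb,\bb)=C_2(\bb,-\bb)$ and $C_1(\bb,-\bb)=C_2(-\bb,\bb)$ to show the two cell probabilities are equal, so their difference vanishes. You instead derive the result purely formally by composing Lemma~\ref{lemAnonymous} (swap the arguments) with Lemma~\ref{lemNegativeK} (negate both arguments, which returns the same ordered pair since $-(-\bb)=\bb$), forcing $K(-\bb,\bb)=-K(-\bb,\bb)$. Both applications of the lemmas are legitimate, and your separate treatment of $\bb=\mathbf{0}$ is a nice touch of care (though the second branch of \eqref{eqPayoff} also gives $0$ there only vacuously, so the case split is genuinely needed). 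Your argument is shorter and reuses structure the paper has already established; in fact the paper itself uses exactly this swap-then-negate chaining later, in the proof of Lemma~\ref{lemSymPureStrat} and in the mixed-strategy half of Lemma~\ref{valuezero}, so your proof anticipates that technique. What the paper's direct computation buys is independence from Lemma~\ref{lemNegativeK}: it exhibits concretely why the probability mass splits evenly between the two half-planes, which is perhaps more illuminating for a reader encountering the symmetry for the first time. One small caveat: your argument relies on Lemma~\ref{lemNegativeK} holding in full generality, and the paper's written proof of that lemma only displays the $\ba\neq\bb$ branch; the $\ba=\bb$ case is trivially true as well, but strictly speaking you are leaning on it only in your degenerate case, which you handle directly anyway, so no gap results.
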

\begin{proof}
This proof also relies on the fact that $(\xi,\eta)$ and $(-\xi,-\eta)$ follow the same probability distribution.
\begin{alignat*}{2}
K(-\bb,\bb) &= (-x_2,-y_2)P\big( (\xi,\eta)\in C_1(-\bb,\bb) \big) + (x_2+y_2)P\big( (\xi,\eta) \in C_2(-\bb,\bb)\big)  \\
&= (x_2+y_2)\Big( P\big( (\xi,\eta) \in C_2(-\bb,\bb)\big) - P\big( (\xi,\eta)\in C_1(-\bb,\bb) \big)\Big)  \\
&= (x_2+y_2)\Big( P\big( (\xi,\eta) \in C_2(-\bb,\bb)\big) - P\big( (-\xi,-\eta)\in C_1(\bb,-\bb) \big)\Big)  \\
&= (x_2+y_2)\Big( P\big( (\xi,\eta) \in C_2(-\bb,\bb)\big) - P\big( (-\xi,-\eta)\in C_2(-\bb,\bb) \big)\Big)  \\
&=(x_2+y_2)\Big( P\big( (\xi,\eta) \in C_2(-\bb,\bb)\big) - P\big( (\xi,\eta)\in C_2(-\bb,\bb) \big)\Big)\\
&= 0
\end{alignat*}
\end{proof}

With the previous lemmas, we can show that the value of the game is zero.
\begin{lemma}\label{valuezero}
Consider a bivariate FOA game where the arbiter chooses {$(\xi, \eta)\sim N(\mathbf{0},\mathbf{\Sigma})$} as a fair settlement and uses $L_2$ distance to measure closeness. The value of the zero-sum game is zero.
\end{lemma}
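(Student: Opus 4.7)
The plan is to use the symmetry lemmas just established to sandwich both the upper value $\bar v := \inf_\ba \sup_\bb K(\ba,\bb)$ and the lower value $\underline v := \sup_\bb \inf_\ba K(\ba,\bb)$ at zero, and then invoke a minimax theorem to collapse them onto the common game value.

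First I would show $\underline v \le 0$: given any Player II strategy $\bb$, Player I can respond with $-\bb$, and Lemma~\ref{lemOppositePure} yields $K(-\bb,\bb)=0$, so $\inf_\ba K(\ba,\bb) \le 0$ for each $\bb$. Symmetrically, I would show $\bar v \ge 0$: given any Player I strategy $\ba$, Player II can respond with $-\ba$; combining anonymity (Lemma~\ref{lemAnonymous}) with Lemma~\ref{lemOppositePure} applied with $\bb:=\ba$ gives $K(\ba,-\ba)=K(-\ba,\ba)=0$, so $\sup_\bb K(\ba,\bb) \ge 0$ for each $\ba$.

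To conclude that the value equals zero, I would appeal to a minimax theorem for continuous zero-sum games on compact strategy spaces (Glicksberg's theorem for the mixed-strategy extension, for instance) to obtain $\underline v = \bar v$, at which point the two bounds above force the common value to be zero. The required continuity of $K$ on $S \times S$ is immediate off the diagonal, and extends across it by rewriting $K(\ba,\bb) = (x_1+y_1) + \bigl[(x_2+y_2)-(x_1+y_1)\bigr]\,P\bigl((\xi,\eta)\in C_2(\ba,\bb)\bigr)$ and noting that as $\bb \to \ba$ the bracketed coefficient vanishes while the probability factor stays bounded.

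The main obstacle is the third step: the symmetry argument by itself only yields $\underline v \le 0 \le \bar v$, which is still consistent with a duality gap, so some minimax-type input is genuinely needed. Once that is granted, together with the short continuity check above, the conclusion is immediate; no further structure of the bivariate normal is used beyond the central symmetry already exploited in Lemmas~\ref{lemAnonymous}--\ref{lemOppositePure}.
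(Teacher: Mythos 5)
Your first two steps are correct: $\inf_\ba K(\ba,\bb)\le K(-\bb,\bb)=0$ for every pure $\bb$ gives $\underline{v}\le 0$, and $\sup_\bb K(\ba,\bb)\ge K(\ba,-\ba)=K(-\ba,\ba)=0$ gives $\bar{v}\ge 0$, where $\underline{v}$ and $\bar{v}$ are the \emph{pure-strategy} lower and upper values. The gap is in your third step. Glicksberg's theorem (or the general minimax theorem the paper invokes) asserts that the \emph{mixed extension} has a value $v$; it does not assert $\underline{v}=\bar{v}$ for the pure values --- that would amount to the existence of a pure saddle point, which is precisely what the rest of the paper is devoted to proving and cannot be assumed here. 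What the minimax theorem actually gives is $\underline{v}\le v\le\bar{v}$, and combined with your bounds $\underline{v}\le 0\le\bar{v}$ this tells you nothing about $v$: the interval $[\underline{v},\bar{v}]$ contains $0$, but a priori $v$ could sit anywhere in it. To conclude $v\le 0$ you need, for \emph{every mixed} strategy $F_2$ of Player II, a reply for Player I achieving expected payoff at most $0$; your reflection argument supplies such a reply only against pure $\bb$.

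The missing idea --- and the heart of the paper's own proof --- is to extend the reflection to mixed strategies: against $F_2$, Player I plays the push-forward $G_1$ of $F_2$ under $(x,y)\mapsto(-x,-y)$, and then Lemmas~\ref{lemAnonymous} and~\ref{lemNegativeK} yield $K(G_1,F_2)=-K(G_1,F_2)$, hence $K(G_1,F_2)=0$. (The paper carries this out after approximating $F_2$ by a finitely supported measure on a symmetric finite set, so that the manipulation reduces to a finite double sum.) With that in hand the contradiction with $v>0$ goes through, and symmetrically for $v<0$. Your continuity check for $K$ across the diagonal is a worthwhile observation that the paper glosses over, but it does not substitute for the mixed-strategy reflection.
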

\begin{proof}
Because the strategy space $S$ of each player is compact, by the general minimax theorem the game has a value $v$. 

First suppose an optimal pure strategy pair $\ba^*,\bb^*$ exists. Suppose $v>0$. Then for any pure strategy $\ba$ of Player I, $K(\ba, \bb^*)\ge v>0$. But by Lemma~\ref{lemOppositePure} 
$K(-\bb^*,\bb^*) = 0$, contradicting that $v>0$. Similarly it cannot be the case that $v<0$. Therefore $v=0$.

Now suppose that optimal mixed strategies $F_1^*,F_2^*$ exist. Suppose $v>0$. Then for any mixed strategy $F_1$, 
\begin{equation}\label{vgzero}
K(F_1, F_2^*)\ge v > 0.\end{equation}
Player II may approximate the optimal strategy $F_2^*$ by $\hat{F}_2^*$ where probability mass is concentrated only on a finite symmetric subset $T\subset S$ such that for $\epsilon>0$ small enough and for any mixed strategy $F_1$,
\begin{equation}\label{vgzero2}
K(F_1, \hat{F}_2^*)\ge v-\epsilon > 0.\footnote{This claim follows from three arguments: First, by a well known  theorem of Varadharajan the space of all probability measures  $M(S)$ is a compact metric space in weak topology. Secondly, the set of probability all measures under weak topology  concentrated on finite subsets of a compact metric space $S$  are themselves dense in the space of all probability measures on $S$. Lastly, by a well known theorem of Prohorov, any compact subset $T$ of $M(S)$ is characterized by 
the property that given $\delta$ positive, there exists a compact subset of $C$ of $T$ such that 
$\mu (C) >1-\delta$ for all $\mu$ in the set $S$.}
\end{equation}

%Let $f_2^*$ be the density function associated with $F_2^*$. 
Define $$g_1^*(x,y) = \hat{f}_2^*(-x,-y), \forall (x,y)\in T$$ and call the associated mixed strategy $G_1^*$. 

\begin{alignat*}{2}
K(G_1^*,\hat{F}_2^*) 
% & =\int\displaylimits_S\int\displaylimits_S  g_1^*(\ba) f_2^*(\bb) K(\ba,\bb) d\ba d\bb \\
% &=\int\displaylimits_S\int\displaylimits_S f_2^*(-\ba) g_1^*(-\bb) K(\ba,\bb) d\ba d\bb\\
% &=\int\displaylimits_S\int\displaylimits_S f_2^*(-\ba) g_1^*(-\bb)K(\bb,\ba) d\ba d\bb\\
% &=-\int\displaylimits_S\int\displaylimits_S g_1^*(-\bb) f_2^*(-\ba) K(-\bb,-\ba) d\ba d\bb
& =\sum\displaylimits_{(\ba,\bb) \in T\times T}
%\sum\displaylimits_{\bb \in T}  
g_1^*(\ba) \hat{f}_2^*(\bb) K(\ba,\bb)\\
&=\sum\displaylimits_{(\ba,\bb) \in T\times T} \hat{f}_2^*(-\ba) g_1^*(-\bb) K(\ba,\bb)\\
&=\sum\displaylimits_{(\ba,\bb) \in T\times T} \hat{f}_2^*(-\ba) g_1^*(-\bb)K(\bb,\ba)\\
&=-\sum\displaylimits_{(\ba,\bb) \in T\times T} g_1^*(-\bb) \hat{f}_2^*(-\ba) K(-\bb,-\ba)
\end{alignat*}
With a change of variables $\bc=-\bb, \bd=-\ba$,
\begin{alignat*}{2}
&=-\sum\displaylimits_{(\bc,\bd) \in T\times T} g_1^*(\bc) \hat{f}_2^*(\bd) K(\bc,\bd)\\
&=-K(G_1^*,\hat{F}_2^*) \end{alignat*}
Therefore $K(G_1^*,\hat{F}_2^*)=0$, contradicting (\ref{vgzero}), so $v\le 0$. In a similar manner we can show that $v \ge 0$.
\end{proof}

%\subsection{Symmetry of Optimal Pure Strategies under $L_p$ Distance Metrics}
The next property states that optimal pure strategy pairs, if they exist, must be symmetric about the origin.
\begin{lemma}
Consider a bivariate Final-Offer Arbitration game where the arbiter chooses $(\xi, \eta)\sim N(\mathbf{0},\mathbf{\Sigma})$ as a fair settlement, and uses $L_2$ distance\footnote{This lemma is true for any $L_p$ metric, but for simplicity the proof is provided only for $L_2$.} for deciding closeness.  Then $(x_2,y_2)$ is an optimal pure strategy for Player II if and only if $(-x_2,-y_2)$ is an optimal pure strategy for Player I. \label{lemSymPureStrat}
\end{lemma}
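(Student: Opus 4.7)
The plan is to reduce the claim to a three-line manipulation built out of the tools already in hand: the value of the game is zero (Lemma~\ref{valuezero}), anonymity $K(\ba,\bb)=K(\bb,\ba)$ (Lemma~\ref{lemAnonymous}), and the negation identity $K(-\ba,-\bb)=-K(\ba,\bb)$ (Lemma~\ref{lemNegativeK}). First I would record the immediate consequence of $v=0$: a pure offer $\bb^*$ is optimal for the maximizer Player II iff $K(\ba,\bb^*)\ge 0$ for every pure $\ba\in S$, and a pure offer $\ba^*$ is optimal for the minimizer Player I iff $K(\ba^*,\bb)\le 0$ for every pure $\bb\in S$. (Testing against pure strategies suffices because $K$ extends linearly to mixed strategies.)

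Now for the forward direction, assuming $\bb^*=(x_2,y_2)$ is optimal for Player II, I would take an arbitrary $\bb\in S$ and chain the two symmetry lemmas:
\[
K(-\bb^*,\bb) \;=\; K(\bb,-\bb^*) \;=\; -K(-\bb,\bb^*) \;\le\; 0,
\]
where the first equality is Lemma~\ref{lemAnonymous}, the second is Lemma~\ref{lemNegativeK} applied with the substitution $\ba\mapsto -\bb$ and $\bb\mapsto \bb^*$ (rearranged), and the inequality is optimality of $\bb^*$ against the pure strategy $-\bb$. This shows $-\bb^*=(-x_2,-y_2)$ is optimal for Player I. The converse direction has exactly the same shape: if $\ba^*=-\bb^*$ satisfies $K(\ba^*,\bb)\le 0$ for all $\bb$, then for any $\ba\in S$,
\[
K(\ba,\bb^*) \;=\; K(\bb^*,\ba) \;=\; -K(-\bb^*,-\ba) \;=\; -K(\ba^*,-\ba) \;\ge\; 0.
\]

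The one genuine subtlety, and really the only place to be careful, is that the above computation evaluates $K$ at the point $-\bb$ (respectively $-\ba$), so it implicitly uses that $-\bb\in S$ whenever $\bb\in S$. I would handle this by noting that since $S$ is only specified to be ``arbitrarily large and compact,'' we may as well take $S$ symmetric about the origin (equivalently, replace $S$ by the compact symmetric set $S\cup(-S)$); without such an assumption the statement of the lemma itself would be asymmetric in the two players. With that convention in force, the argument above is complete, and no further computation involving $\mathbf{\Sigma}$, the distribution of $(\xi,\eta)$, or the regions $C_i$ is needed.
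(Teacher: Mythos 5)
Your proof is correct and follows essentially the same route as the paper's: both reduce the claim to the chain $K(-\bb^*,\bb)=K(\bb,-\bb^*)=-K(-\bb,\bb^*)$ using Lemmas~\ref{lemAnonymous} and~\ref{lemNegativeK} together with the zero value from Lemma~\ref{valuezero}; the paper merely phrases it as a contradiction (via a hypothetical $\bb^\circ$ with $K(-\bb^*,\bb^\circ)>0$) while you argue directly. Your remark that $S$ should be taken symmetric about the origin is a legitimate point the paper leaves implicit (it tacitly uses such symmetry elsewhere, e.g.\ the symmetric subset $T$ in Lemma~\ref{valuezero}).
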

\begin{proof}
Suppose $\bb^*=(x_2^*,y_2^*)$ is an optimal pure strategy for Player II. Because the value of the game is zero, 
\begin{equation}\label{bstaroptimal}K(\ba,\bb^*)\ge 0, \forall \ba.\end{equation}
If $-\bb^*$ is not an optimal pure strategy for Player I then there exists $\bb^\circ$ such that
$$K(-\bb^*,\bb^\circ) >0.$$
By Lemmas~\ref{lemAnonymous} and \ref{lemNegativeK},
\begin{alignat*}{2}
K(-\bb^\circ,\bb^*) &= K(\bb^*,-\bb^\circ)\\
&= -K(-\bb^*,\bb^\circ)\\
&<0
\end{alignat*}
but this contradicts (\ref{bstaroptimal}), so it must be the case that $-\bb^*$ is an optimal pure strategy for Player I. The converse of the lemma is shown in an analogous way.
\end{proof}

Recall from (\ref{eqPayoff}), that if Player I chooses $\ba=(x_1,y_1)$ and Player II chooses $\bb=(x_2,y_2)$, assuming $\ba \not=\bb$, 
\begin{alignat*}{2}
K(\ba,\bb) &= (x_1+y_1)P\big((\xi,\eta)\in C_1(\ba,\bb)\big) + (x_2 + y_2)P\big((\xi,\eta)\in C_2(\ba,\bb)\big)\\
&=(x_1+y_1)P\big((\xi,\eta)\in C_1(\ba,\bb)\big) + (x_2 + y_2)[1-\big((\xi,\eta)\in C_1(\ba,\bb)\big)]\\
&=(x_2+y_2) + (x_1+y_1-x_2-y_2)P\big((\xi,\eta)\in C_1(\ba,\bb)\big).
\end{alignat*}
\begin{lemma}
 Suppose in the bivariate FOA game as described the arbiter chooses $(\xi, \eta)\sim N(\mathbf{0},\mathbf{\Sigma})$ and uses the $L_2$ metric to measure closeness. If a pure optimal strategy pair $\ba^*=(x_1^*,y_1^*),  \bb^*=(x_2^*,y_2^*)$ exists, then $x_2^*\ge 0,y_2^* \ge 0$ and $x_1^*\le0, y_1^*\le 0$. \label{lemPositive}
 \end{lemma}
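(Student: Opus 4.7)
The plan is to reduce the lemma to the two assertions $x_2^*\ge 0$ and $y_2^*\ge 0$, and then to invoke Lemma~\ref{lemSymPureStrat} for the $\ba^*$ inequalities. That earlier lemma shows that the sets of optimal pure strategies for the two players are negatives of each other, so $\ba^*$ must equal $-\bb'$ for some optimal strategy $\bb'$ of Player II; once I know every optimal strategy of Player II has nonnegative components, $\ba^*$ automatically has nonpositive components.

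To prove $x_2^*\ge 0$ I argue by contradiction: assume $x_2^*<0$ and exhibit a deviation by Player I that yields a strictly negative expected payoff, violating $K(\ba,\bb^*)\ge 0$ (which holds for all $\ba$ by Lemma~\ref{valuezero} and the optimality of $\bb^*$). The key choice is $\ba=(x_2^*,-y_2^*)$, the reflection of $\bb^*$ through the $x$-axis. Because the two offers share their $x$-coordinate, the midset $Mid(\ba,\bb^*)$ collapses to the line $y=0$ whenever $y_2^*\neq 0$, a line through the origin. By the central symmetry of $N(\mathbf{0},\mathbf{\Sigma})$, the two half-planes carry probability $1/2$ apiece, and the payoff formula
\[
K(\ba,\bb^*)=\tfrac{1}{2}(x_2^*-y_2^*)+\tfrac{1}{2}(x_2^*+y_2^*)=x_2^*
\]
is strictly negative, the contradiction sought. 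The degenerate case $y_2^*=0$ coincides with $\ba=\bb^*$, for which the payoff formula gives $K=x_2^*+y_2^*=x_2^*<0$ directly.

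The proof that $y_2^*\ge 0$ is completely analogous, now using the deviation $\ba=(-x_2^*,y_2^*)$, whose midset with $\bb^*$ is the line $x=0$; the identical calculation produces $K(\ba,\bb^*)=y_2^*<0$ under the contrary hypothesis, with the case $x_2^*=0$ handled again by $\ba=\bb^*$. After these two claims, the reduction in the first paragraph delivers $x_1^*\le 0$ and $y_1^*\le 0$.

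The main obstacle is not algebraic but structural: the whole argument hinges on choosing a deviation whose midset is a line through the origin, because only then does the central symmetry of the bivariate normal force both halfspace probabilities to equal $1/2$ and eliminate the need to compute any integrals. Reflecting a single coordinate of $\bb^*$ is the natural, and essentially the only, way to engineer this; once the right deviations are identified, the remaining verification is a one-line substitution into the payoff formula.
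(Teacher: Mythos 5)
Your proposal is correct and matches the paper's argument almost exactly: the paper uses the same deviation $(x_2^*,-y_2^*)$, the same observation that the midset is the line $y=0$ so each half-plane has probability $\tfrac{1}{2}$, and the same conclusion $K=x_2^*<0$. The only cosmetic difference is that you derive the Player I inequalities from Lemma~\ref{lemSymPureStrat} whereas the paper simply repeats the symmetric argument, and your version of the computation handles the case where both of Player II's components are negative uniformly, which the paper relegates to a footnote.
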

 \begin{proof}
We know that if both players are playing optimally then the expected payoff is zero. Suppose only one of Player II's offers is negative\footnote{Player II cannot possibly be playing optimally if both $x_2^*<0$ and $y_2^*<0$, for in this case Player I may simply agree to the Player II's final offer and happily accept a negative net settlement.}; WLOG let $x_2^*< 0$. By playing $(-x_2^*,-y_2^*)$, Player I is guaranteeing a zero expected payoff. Suppose Player I instead switches to $(x_2^*,-y_2^*)$. 
If $y_2^*=0$ then the final offers are identical and the net award is $x_2^*$. Therefore let us assume $y_2^*>0$.
\begin{alignat*}{2}
K\big( (x_2^*,-y_2^*), (x_2^*,y_2^*)\big ) &= (x_2^*+y_2^*) + (x_2^*-y_2^*-x_2^*-y_2^*)P\big ((\xi,\eta) \in C_1( (x_2^*,-y_2^*), (x_2^*,y_2^*)  )\big)\\
&=x_2^* + y_2^*\Big(1-2P\big ((\xi,\eta) \in C_1( (x_2^*,-y_2^*), (x_2^*,y_2^*)  )\big)\Big)
\end{alignat*} Since $C_1=\left\{ (x,y) : y<0\right\}$, $P((\xi,\eta)\in C_1) =P(\eta <0)=\frac{1}{2}$. Therefore, $$K\big( (x_2^*,-y_2^*), (x_2^*,y_2^*)\big )=x_2^*<0.$$ This contradicts that $(x_2^*,y_2^*)$ is an optimal pure strategy for Player II. Thus $x_2^*\ge 0$. Because the choice of component is arbitrary, $y_2^*\ge0$ as well.  
The argument is the same to show that Player I's component offers must be non-positive in order to play optimally.
 \end{proof}

\section{Local Optimality of Pure Strategies}
Having established some of the properties of the game in question, we now derive a pure strategy pair for the players and show that it is a local equilibrium.  

\begin{theorem}
If the arbiter chooses $(\xi, \eta)\sim N(\mathbf{0},\mathbf{\Sigma})$ and uses the $L_2$ metric to measure closeness, the solution points for the two players $i=1,2$
\begin{equation}
(x_i^*,y_i^*)= \left((-1)^i\frac{\sqrt{2\pi(\sigma_x^2 + 2\rho\sigma_x\sigma_y+\sigma_y^2)}}{4},(-1)^i\frac{\sqrt{2\pi(\sigma_x^2 + 2\rho\sigma_x\sigma_y+\sigma_y^2)}}{4}\right)
\end{equation}
constitute a local equilibrium provided $\rho>\max\left\{
-\frac{\sigma_x^2+3\sigma_y^2}{4\sigma_x\sigma_y}, 
-\frac{3\sigma_x^2+\sigma_y^2}{4\sigma_x\sigma_y}
\right\}.$
\end{theorem}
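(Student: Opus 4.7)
The plan is to parametrize Player II's side of the problem explicitly, derive the claimed value of $c=\tfrac14\sqrt{2\pi(\sigma_x^2+2\rho\sigma_x\sigma_y+\sigma_y^2)}$ from the first-order conditions at $\bb^*=(c,c)$, verify that the second-order conditions hold precisely under the stated bounds on $\rho$, and then transfer the conclusion to Player I using the symmetry lemmas already proved.

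First, because the midset of two distinct points is their perpendicular bisector, $C_1(\ba^*,\bb)$ is a half-plane. Fixing $\ba^*=(-c,-c)$ and writing $\bb=(x_2,y_2)$, the point $(\xi,\eta)$ lies in $C_1(\ba^*,\bb)$ iff $(x_2+c)\xi+(y_2+c)\eta<\tfrac{1}{2}(x_2^2+y_2^2-2c^2)$. The left-hand side is a mean-zero Gaussian with variance $\sigma^2=(x_2+c)^2\sigma_x^2+2(x_2+c)(y_2+c)\rho\sigma_x\sigma_y+(y_2+c)^2\sigma_y^2$, so with $\gamma=\tfrac12(x_2^2+y_2^2-2c^2)$ one has $P((\xi,\eta)\in C_1(\ba^*,\bb))=\Phi(\gamma/\sigma)$, and substitution into \eqref{eqPayoff} yields
$$K(\ba^*,\bb)=(x_2+y_2)-(x_2+y_2+2c)\,\Phi(\gamma/\sigma).$$
At $\bb^*$ we have $\gamma=0$, $\sigma=2cS$ where $S=\sqrt{\sigma_x^2+2\rho\sigma_x\sigma_y+\sigma_y^2}$, $\Phi(0)=1/2$, and $\phi(0)=1/\sqrt{2\pi}$. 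A direct differentiation gives $K_{x_2}|_{\bb^*}=\tfrac12-\tfrac{2c}{S\sqrt{2\pi}}$, and the symmetric expression for $K_{y_2}|_{\bb^*}$. Setting either to zero recovers exactly $c=\sqrt{2\pi}\,S/4$, which confirms that $\bb^*$ is a critical point.

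Next I would compute the Hessian at $\bb^*$. Because $\phi'(0)=0$, each second partial reduces to $K_{ij}|_{\bb^*}=-\tfrac{2}{\sqrt{2\pi}}\bigl(u_{i}+2c\,u_{ij}\bigr)\big|_{\bb^*}$ with $u=\gamma/\sigma$. Using $\gamma=0$ at $\bb^*$ one finds $u_{x_2}|_{\bb^*}=1/(2S)$ and $u_{x_2 x_2}|_{\bb^*}=\sigma_y(\rho\sigma_x+\sigma_y)/(2cS^3)$, with symmetric formulas after interchanging $x$ and $y$. Assembling the diagonal entries gives
$$K_{x_2 x_2}|_{\bb^*}=-\frac{\sigma_x^2+4\rho\sigma_x\sigma_y+3\sigma_y^2}{\sqrt{2\pi}\,S^3},\qquad K_{y_2 y_2}|_{\bb^*}=-\frac{3\sigma_x^2+4\rho\sigma_x\sigma_y+\sigma_y^2}{\sqrt{2\pi}\,S^3}.$$
The mixed partial turns out to vanish: one computes $u_{x_2}+u_{y_2}=1/S$ and $u_{x_2 y_2}|_{\bb^*}=-1/(4cS)$, whose contributions cancel, so $K_{x_2 y_2}|_{\bb^*}=0$. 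Thus the Hessian is diagonal and is strictly negative definite exactly when both diagonal entries are negative, which is equivalent to $\rho>\max\bigl\{-(\sigma_x^2+3\sigma_y^2)/(4\sigma_x\sigma_y),\,-(3\sigma_x^2+\sigma_y^2)/(4\sigma_x\sigma_y)\bigr\}$. Under this hypothesis $\bb^*$ is a strict local maximizer of $K(\ba^*,\cdot)$.

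Finally, to transfer the conclusion to Player I, Lemmas~\ref{lemAnonymous} and \ref{lemNegativeK} together with $\ba^*=-\bb^*$ give
$$K(\ba,\bb^*)=K(\bb^*,\ba)=-K(-\bb^*,-\ba)=-K(\ba^*,-\ba),$$
so $\ba\mapsto K(\ba,\bb^*)$ attains a strict local \emph{minimum} at $\ba^*$ as an immediate consequence of the local-maximum property just established for $\bb\mapsto K(\ba^*,\bb)$ at $\bb^*$. The pair $(\ba^*,\bb^*)$ is therefore a local equilibrium. The main technical obstacle I anticipate is the second-order calculation: specifically, spotting the cancellation that forces the off-diagonal Hessian entry to be zero. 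Without that cancellation one would have to analyze a full $2\times2$ determinant condition, whereas with it the restriction on $\rho$ reduces cleanly to the two sign conditions on the diagonal entries that appear in the theorem statement.
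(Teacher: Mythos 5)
Your proposal is correct, and its core is the same as the paper's: express $P\big((\xi,\eta)\in C_1\big)$ as $\Phi$ of a ratio, check the first-order conditions at the candidate point, and establish definiteness of the Hessian by showing the mixed partial vanishes while the diagonal entries have the required sign, which is exactly where the two bounds on $\rho$ come from. Your intermediate quantities all check out: $u_{x_2}|_{\bb^*}=1/(2S)$, $u_{x_2x_2}|_{\bb^*}=\beta/(2cS^3)$ with $\beta=\rho\sigma_x\sigma_y+\sigma_y^2$, $u_{x_2y_2}|_{\bb^*}=-1/(4cS)$, the cancellation forcing $K_{x_2y_2}|_{\bb^*}=0$, and the diagonal entries $-(\sigma_x^2+4\rho\sigma_x\sigma_y+3\sigma_y^2)/(\sqrt{2\pi}\,S^3)$ and its mirror image; these agree with the paper's $\frac{\phi(0)}{\sqrt{\alpha+\beta}}\left(3-\frac{2\alpha}{\alpha+\beta}\right)$ form up to the sign flip between the two players' problems. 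Where you genuinely diverge is organizational: the paper treats $K$ as a function of all four offer coordinates, solves the full system of four first-order equations to \emph{derive} the candidate point, computes Player I's Hessian, and then merely asserts that Player II's side ``can be similarly verified.'' You instead fix $\ba^*$, carry out the entire second-order analysis once for Player II's two-variable problem, and transfer the conclusion to Player I via $K(\ba,\bb^*)=K(\bb^*,\ba)=-K(-\bb^*,-\ba)=-K(\ba^*,-\ba)$, using Lemmas~\ref{lemAnonymous} and \ref{lemNegativeK}. That buys you a single Hessian computation plus a rigorous one-line transfer in place of the paper's duplicated (and partly hand-waved) verification; what you give up is the derivation of the equilibrium candidate from the stationarity system, so your argument verifies the stated point rather than producing it --- which is all the theorem as stated actually requires.
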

\begin{proof}
Recall that 
\begin{equation}
K(\ba,\bb) =(x_2+y_2) + (x_1+y_1-x_2-y_2)P(\text{Player I wins})
\end{equation}
The event that ``Player I wins'' occurs precisely when the arbiter picks a random fair settlement $(\xi,\eta)$ and
\begin{equation}(x_1-\xi)^2+(y_1-\eta)^2 < (x_2-\xi)^2+(y_2-\eta)^2\end{equation}
which is equivalent to
\begin{equation}(x_2-x_1)\xi + (y_2-y_1)\eta < \frac{x_2^2+y_2^2-x_1^2-y_1^2}{2}=w.\end{equation}
Letting $\Omega=(x_2-x_1)\xi + (y_2-y_1)\eta$, we have that $\Omega\sim N(0,\sigma_\Omega^2)$ where
\begin{equation}
\sigma^2_\Omega = (x_2-x_1)^2\sigma_x^2+2(x_2-x_1)(y_2-y_1)\rho\sigma_x\sigma_y+(y_2-y_1)^2\sigma_y^2.
\end{equation}
And $\Omega/\sigma_\Omega$ follows a standard normal distribution.
Thus we may express the expected payoff as 
\begin{equation}
K(\ba,\bb)  =(x_2+y_2) + (x_1+y_1-x_2-y_2)\Phi(z)\end{equation}
where $\Phi(z)$ is the distribution function of a standard normal random variable and \begin{equation}z=\frac{w}{\sigma_\Omega}=\frac{x_2^2+y_2^2-x_1^2-y_1^2}{2\sqrt{(x_2-x_1)^2\sigma_x^2+2(x_2-x_1)(y_2-y_1)\rho\sigma_x\sigma_y+(y_2-y_1)^2\sigma_y^2}}.\label{equationz}
\end{equation}

% These details are omitted for space
%
%$$\frac{dz}{dx_1}=-\frac{x_1}{\sigma_\Omega}+\frac{(x_2-x_1)\sigma_x^2+(y_2-y_1)\rho\sigma_x\sigma_y}{\sigma_\Omega^2}z, \quad \frac{dz}{dy_1}=-\frac{y_1}{\sigma_\Omega}+\frac{(x_2-x_1)\rho\sigma_x\sigma_y+(y_2-y_1)\sigma_y^2}{\sigma_\Omega^2}z$$
%$$\frac{dz}{dx_2}=\frac{x_2}{\sigma_\Omega}-\frac{(x_2-x_1)\sigma_x^2+(y_2-y_1)\rho\sigma_x\sigma_y}{\sigma_\Omega^2}z,\quad\frac{dz}{dy_2}=\frac{y_2}{\sigma_\Omega}-\frac{(x_2-x_1)\rho\sigma_x\sigma_y+(y_2-y_1)\sigma_y^2}{\sigma_\Omega^2}z$$
%

The four partial first derivatives of $K(\ba,\bb)$ are then
\begin{alignat}{2}
\frac{\partial K}{\partial x_1} &= \Phi(z)+ (x_1+y_1-x_2-y_2)\phi(z)\left(-\frac{x_1}{\sigma_\Omega}+\frac{(x_2-x_1)\sigma_x^2+(y_2-y_1)\rho\sigma_x\sigma_y}{\sigma_\Omega^2}z\right) \label{l2dx1}\\
\frac{\partial K}{\partial y_1} &= \Phi(z) +(x_1+y_1-x_2-y_2)\phi(z)\left(-\frac{y_1}{\sigma_\Omega}+\frac{(x_2-x_1)\rho\sigma_x\sigma_y+(y_2-y_1)\sigma_y^2}{\sigma_\Omega^2}z\right)\label{l2dy1}\\
\frac{\partial K}{\partial x_2} &= 1-\Phi(z) +(x_1+y_1-x_2-y_2)\phi(z)\left(\frac{x_2}{\sigma_\Omega}-\frac{(x_2-x_1)\sigma_x^2+(y_2-y_1)\rho\sigma_x\sigma_y}{\sigma_\Omega^2}z\right)\label{l2dx2}\\
\frac{\partial K}{\partial y_2} &= 1-\Phi(z) +(x_1+y_1-x_2-y_2)\phi(z)\left(\frac{y_2}{\sigma_\Omega}-\frac{(x_2-x_1)\rho\sigma_x\sigma_y+(y_2-y_1)\sigma_y^2}{\sigma_\Omega^2}z\right)\label{l2dy2}
\end{alignat}

If the players have optimal pure strategies $\ba^*$ and $\bb^*$ then we must have all four first derivatives zero. By setting them equal to zero at $(\ba^*,\bb^*)$ and by adding all (\ref{l2dx1}) - (\ref{l2dy2}) we get
\begin{equation}0=2-\frac{(x_1^*+y_1^*-x_2^*-y_2^*)^2}{\sigma_\Omega^*}\phi(z^*) \label{l2sumeq}\end{equation}
so  $x_1^*+y_1^*-x_2^*-y_2^*\not=0$. By adding (\ref{l2dx1}) and (\ref{l2dx2}) we have
\begin{equation}
0 = 1 +\frac{x_2^*-x_1^*}{\sigma_\Omega^*}(x_1^*+y_1^*-x_2^*-y_2^*)\phi(z^*) \label{sumdx1dx2}
\end{equation}
and by adding (\ref{l2dy1}) and (\ref{l2dy2}) we have
\begin{equation}
0 = 1 +\frac{y_2^*-y_1^*}{\sigma_\Omega^*}(x_1^*+y_1^*-x_2^*-y_2^*)\phi(z^*).\label{sumdy1dy2}
\end{equation}
From (\ref{sumdx1dx2}), (\ref{sumdy1dy2}) and Lemma~\ref{lemPositive} we know that \begin{equation}x_2^*-x_1^*=y_2^*-y_1^*=d^*>0.\label{thm25eq1}\end{equation} Note also that
\begin{equation}
\sigma_\Omega^{*2}=d^{*2}(\alpha+\beta),
\end{equation}
where $\alpha=\sigma_x^2+\rho\sigma_x\sigma_y$ and $\beta=\rho\sigma_x\sigma_y+\sigma_y^2$. Furthermore, we now have that
\begin{equation}\label{zstar}
z^*=\frac{d^*((x_2^*+x_1^*)+(y_2^*+y_1^*))}{2\sigma_\Omega^{*}}=\frac{x_2^*+x_1^*+y_2^*+y_1^*}{2\sqrt{\alpha+\beta}}.
\end{equation}

We may now simplify the four equations derived from (\ref{l2dx1})-(\ref{l2dy2}) as
\begin{alignat}{2}
0 
& = \Phi(z^*)+ 2\phi(z^*)\left(\frac{x_1^*}{\sqrt{\alpha+\beta}}-\frac{\alpha}{\alpha+\beta}z^*\right)
\label{l2dx1star}\\
0 
&= \Phi(z^*) + 2\phi(z^*)\left(\frac{y_1^*}{\sqrt{\alpha+\beta}}-\frac{\beta}{\alpha+\beta}z^*\right)
\label{l2dy1star}\\
0 
&= 1-\Phi(z^*) - 2\phi(z^*)\left(\frac{x_2^*}{\sqrt{\alpha+\beta}}-\frac{\alpha}{\alpha+\beta}z^*\right)
\label{l2dx2star}\\
0 
&= 1-\Phi(z^*) - 2\phi(z^*)\left(\frac{y_2^*}{\sqrt{\alpha+\beta}}-\frac{\beta}{\alpha+\beta}z^*\right)
\label{l2dy2star}
\end{alignat}
By taking $(\ref{l2dx1star})+(\ref{l2dy1star})-(\ref{l2dx2star})-(\ref{l2dy2star})$ and using (\ref{zstar}) we get
\begin{alignat*}{2}
0 &= -2 + 4\Phi(z^*) + 2\phi(z^*)\left(\frac{x_1^*+y_1^*+x_2^*+y_2^*}{\sqrt{\alpha+\beta}} -2\frac{\alpha+\beta}{\alpha+\beta}z^*\right)\\
\frac{1}{2}&=\Phi(z^*)+2\phi(z^*)(2z^*-2z^*)\\
&=\Phi(z^*)
\end{alignat*}
Thus $z^*=0$, and by simplifying the four equations (\ref{l2dx1star})-(\ref{l2dy2star}) we get that $x_1^*=y_1^*=-x_2^*=-y_2^*$. We simplify $\sigma^{*2}_\Omega = 4x_1^{*2}(\alpha+\beta)$ and noting that $\phi(0)=\frac{1}{\sqrt{2\pi}}$, equation~(\ref{l2dx1star}) becomes 
\begin{equation}
%\begin{alignat*}{3}
 0= \frac{1}{2}+2\phi(0)\left(\frac{x_1^*}{\sqrt{\alpha+\beta}}\right),
 \end{equation}
 or equivalently
 \begin{equation}
 x_1^* = -\frac{\sqrt{2\pi(\sigma_x^2+2\rho\sigma_x\sigma_y+\sigma_y^2)}}{4}.
\end{equation}

To show that $\bb^*$ is a local maximum for Player II and $\ba^*$ is a local minimum for Player~I we look at the second partial derivatives evaluated at $(\ba^*,\bb^*)$. Letting $$v(\ba,\bb)=(x_1+y_1-x_2-y_2), \quad u(\ba,\bb)=(x_2-x_1)\sigma_x^2+(y_2-y_1)\rho\sigma_x\sigma_y$$ 
$$\frac{\partial K}{\partial x_1} = \Phi(z)+ \phi(z)\left(-\frac{vx_1}{\sigma_\Omega}+\frac{vuw}{\sigma_\Omega^3}\right)$$
\begin{alignat*}{2}
 \frac{\partial^2K}{\partial x_1^2} &= \phi(z)\left(-\frac{x_1}{\sigma_\Omega}+\frac{uz}{\sigma_\Omega^2}\right) +\phi(z)\left(-z\right)\frac{dz}{dx_1}\left(-\frac{vx_1}{\sigma_\Omega}+\frac{vuw}{\sigma_\Omega^3}\right)\\
 &+\phi(z)\left(\left[-\frac{x_1}{\sigma_\Omega}
 - \frac{v}{\sigma_\Omega}
 -\frac{vx_1u}{\sigma_\Omega^3}\right]
  + \left[
  \frac{uw}{\sigma_\Omega^3} - \frac{vw\sigma_x^2}{\sigma_\Omega^3} - \frac{vux_1}{\sigma_\Omega^3} + \frac{\frac{3}{2}vx_1w}{\sigma_\Omega^5}
  \right]
 \right)\\
\left.\frac{\partial^2K}{\partial x_1^2}\right|_{(\ba^*,\bb^*)} &= \phi(0)\left(-\frac{2x_1^*}{\sigma_\Omega^*} -\frac{v^*}{\sigma_\Omega^*} - \frac{2u^*v^*x_1^*}{\sigma_\Omega^{*3}}\right)\\
 &=\phi(0)\left(-\frac{2x_1^*}{-2x_1^*\sqrt{\alpha+\beta}} -\frac{4x_1^*}{-2x_1^*\sqrt{\alpha+\beta}} - \frac{8(-2x_1^*\alpha)x_1^{*3}}{-8x_1^{*3}(\alpha+\beta)^{3/2}}\right)\\
 &=\frac{\phi(0)}{\sqrt{\alpha+\beta}}\left(3-\frac{2\alpha}{\alpha+\beta}\right)
\end{alignat*}

since $\sigma_\Omega^* = 2 (-x_1^*)\sqrt{\alpha+\beta} $. 
This will be positive if and only if $\alpha+3\beta>0$, 
%\begin{alignat*}{2}
%\alpha+3\beta &>0
%0 &< 3-\frac{2(\sigma_x^2+\rho\sigma_x\sigma_y)}{\sigma_x^2+2\rho\sigma_x\sigma_y+\sigma_y^2} \\
%&=\sigma_x^2+4\rho\sigma_x\sigma_x + 3\sigma_y^2 
%\end{alignat*}
or equivalently, 
\begin{equation}\rho > -\frac{\sigma_x^2+3\sigma_y^2}{4\sigma_x\sigma_y}.\label{l2xxpos}\end{equation}
%This is guaranteed unless $\frac{\sigma_x}{3} \le \sigma_y \le \sigma_x$, in which case the right hand side could be $\ge -1$.

Similarly, 
$$\left.\frac{\partial^2 K}{\partial y_1^2}\right|_{(\ba^*,\bb^*)} = \frac{\phi(0)}{\sqrt{\alpha+\beta}}\left(
    3 -\frac{2\beta}{\alpha+\beta}
 \right)$$
 which is positive if and only if 
 \begin{equation}
\rho>-\frac{3\sigma_x^2+\sigma_y^2}{4\sigma_x\sigma_y}\label{l2yypos}.\end{equation}
 Note that it is impossible for both (\ref{l2xxpos}) and (\ref{l2yypos}) to be unsatisfied, as this would imply that $\alpha<0$ and $\beta<0$ and thus $\alpha+\beta<0$.
 
 Letting $t(\ba,\bb)= (x_2-x_1)\rho\sigma_x\sigma_y + (y_2-y_1)\sigma_y^2 $, the mixed partial derivative is
\begin{alignat*}{2}
\frac{\partial^2 K}{\partial y_1 \partial x_1} &=
\frac{\partial}{\partial y_1} \frac{\partial K}{\partial x_1}\\
&=\frac{\partial}{\partial y_1} \left( \Phi(z)+ \phi(z)\left(-\frac{vx_1}{\sigma_\Omega}+\frac{vuw}{\sigma_\Omega^3}\right) \right)\\
&=\phi(z)\left(-\frac{y_1}{\sigma_\Omega}+\frac{tz}{\sigma_\Omega^2}\right)+\phi(z)\left(-z\right)\frac{dz}{dy_1}\left(-\frac{vx_1}{\sigma_\Omega}+\frac{vuw}{\sigma_\Omega^3}\right)\\
  &+\phi(z)\left(\frac{-x_1}{\sigma_\Omega} - \frac{vx_1t}{\sigma_\Omega^3} 
    -\frac{\rho\sigma_x\sigma_y vw}{\sigma_\Omega^3} 
    -\frac{y_1uv}{\sigma_\Omega^3}
    +\frac{3utwv 
  }{\sigma_\Omega^5}
  +  \frac{u w}{\sigma_\Omega^3}
 \right)\\
 \left.\frac{\partial^2 K}{\partial y_1 \partial x_1} \right|_{(\ba^*,\bb^*)}&=\phi(0)\left(-\frac{x_1^*}{\sigma_\Omega^*}-\frac{x_1^*}{\sigma_\Omega^*}-\frac{x_1^*v^*t^*}{\sigma_\Omega^{*3}}-\frac{x_1^*u^*v^*}{\sigma_\Omega^{*3}} \right)\\
&=\frac{x_1^*}{\sigma_\Omega^{*3}}\phi(0)\left(-2\sigma_\Omega^{*2}-v^*(u^*+t^*) \right)\\
&=\frac{x_1^*}{\sigma_\Omega^{*3}}\phi(0)\left(-2(4x_1^{*2}(\alpha+\beta) )-4x_1^*(-2x_1^*(\alpha+\beta)) \right)\\
&=\frac{x_1^*}{\sigma_\Omega^{*3}}\phi(0)\left(0 \right)\\
 &=0
\end{alignat*}

Then $K_{x_1x_1}K_{y_1y_1}-K_{x_1y_1}^2>0$ as long as 
$$\rho > \max\left\{
-\frac{\sigma_x^2+3\sigma_y^2}{4\sigma_x\sigma_y}, 
-\frac{3\sigma_x^2+\sigma_y^2}{4\sigma_x\sigma_y}
\right\}.$$
It can be similarly verified that $\bb^*$ is a local maximum for Player II when Player I plays $\ba^*$, with the same condition on $\rho$.
\end{proof}

\section{Global Optimality of Pure Strategies}
We now proceed to show that the pure strategies found in the preceding section are indeed globally optimal and thus represent the unique optimal strategy pair. We first briefly consider the special case where $\sigma_x=\sigma_y$.
\begin{theorem}
If $\sigma_x=\sigma_y$, then the pure strategy pair $\ba^*,\bb^*$ is a global equilibrium.\label{thmL2GE5}
\end{theorem}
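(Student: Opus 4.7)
The plan is to reduce the special case $\sigma_x = \sigma_y =: \sigma$ to the one-dimensional Brams--Merrill theorem via a $45^\circ$ rotation of the plane. Set $u = (x+y)/\sqrt{2}$ and $v = (y-x)/\sqrt{2}$. Under this orthogonal change of coordinates the arbiter's random vector becomes $(U,V) = ((\xi+\eta)/\sqrt{2},\,(\eta-\xi)/\sqrt{2})$, which is bivariate normal with \emph{independent} components $U \sim N(0,\sigma^2(1+\rho))$ and $V \sim N(0,\sigma^2(1-\rho))$. Euclidean distance is preserved by the rotation, so the sets $C_1$, $C_2$ and the midset transform as expected, and the proposed equilibrium becomes $(u_i^*, v_i^*) = ((-1)^i\sigma\sqrt{2\pi(1+\rho)}/2,\ 0)$. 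In particular, $(u_1^*, u_2^*)$ is exactly the Brams--Merrill single-issue equilibrium for an arbiter drawn from $N(0, \sigma^2(1+\rho))$.

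By the symmetry Lemmas~\ref{lemAnonymous} and \ref{lemNegativeK}, the inequalities $K(\ba, \bb^*) \geq 0$ and $K(\ba^*, \bb) \leq 0$ are equivalent (set $\ba = -\bb$), so it suffices to prove $K(\ba, \bb^*) \geq 0$ for every $\ba$. Writing $\ba = (u_1, v_1)$ in rotated coordinates and using $x_i + y_i = \sqrt{2}\,u_i$,
\begin{equation*}
K(\ba, \bb^*) \;=\; \sqrt{2}\,u_2^* \;+\; \sqrt{2}(u_1 - u_2^*)\,P\!\left((U,V) \in C_1(\ba, \bb^*)\right).
\end{equation*}
When $u_1 \geq u_2^*$ both summands are non-negative and $K \geq 0$ trivially. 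For $u_1 < u_2^*$ the key reduction is that the win-probability $P$ is maximized over $v_1$ at $v_1 = 0$; granting this, $K(\ba, \bb^*) \geq K((u_1,0), (u_2^*,0))$ and the right-hand side is precisely the payoff in the one-dimensional Brams--Merrill game on the $u$-axis, which is non-negative with equality only at $u_1 = -u_2^*$.

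The crux is therefore the monotonicity claim. Explicitly writing the half-plane probability from the midset of $(u_1, v_1)$ and $(u_2^*, 0)$ gives $P = \Phi(z(v_1))$ with
\begin{equation*}
z(v_1) \;=\; \frac{u_2^{*2} - u_1^2 - v_1^2}{2\sigma\sqrt{(u_2^*-u_1)^2(1+\rho) + v_1^2(1-\rho)}}.
\end{equation*}
Setting $t := v_1^2$, $A := u_2^{*2} - u_1^2$, and $B := (u_2^*-u_1)^2(1+\rho)$, a short computation shows that $\mathrm{sgn}(dz/dt) = -\mathrm{sgn}\!\left(2B + (1-\rho)(A+t)\right)$. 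At $t=0$ the bracket factors as $(u_2^*-u_1)\,[(3+\rho)u_2^* - (1+3\rho)u_1]$, which is strictly positive for any $u_1 < u_2^*$ with $0 < \rho < 1$; since the bracket is linearly increasing in $t$, it stays positive for all $t \geq 0$, so $z$ (and hence $P$) is strictly decreasing in $t$ and is therefore maximized at $v_1 = 0$.

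I expect the only subtle point to be the monotonicity check in the regime $u_1 < -u_2^*$, where $A < 0$ and the numerator of $z$ is itself negative, so one might worry that the critical-point structure changes. However $u_1 < 0$ makes $-(1+3\rho)u_1 > 0$, so the factorization above stays positive and the argument proceeds uniformly. After that, the single-issue Brams--Merrill theorem supplies the one-dimensional inequality on the $v_1 = 0$ slice, and the symmetry lemmas give the corresponding bound for Player~II at no extra cost.
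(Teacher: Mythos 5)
Your proof is correct, and its skeleton is the same as the paper's: restrict Player I to the slices on which the net offer $x_1+y_1$ is constant, show that on each slice the payoff against $\bb^*$ is minimized on the diagonal $y=x$, then invoke the one-dimensional Brams--Merrill theorem along that diagonal. Your $45^\circ$ rotation is a clean repackaging of the paper's device of fixing $\tilde{w}=x+y$ and varying $x$ (your $u_1$ is $\tilde{w}/\sqrt{2}$ and $v_1$ is the transverse variable), and the independence of $U$ and $V$ when $\sigma_x=\sigma_y$ is precisely what makes the diagonal special; your identification of $u_2^*$ with the Brams--Merrill point for $N(0,\sigma^2(1+\rho))$ and the symmetry reduction via Lemmas~\ref{lemAnonymous} and~\ref{lemNegativeK} both check out. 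Where you genuinely improve on the paper is the justification of the ``diagonal is optimal'' step. The paper notes that the numerator of $z$ is maximized and the denominator minimized at the same point $x=\tilde{w}/2$ and leans on Lemma~\ref{lemMaxFracFunction}; that reasoning is immediate when the numerator is positive, but in the regime $x_1^2+y_1^2>2x_2^{*2}$ (your $A<0$, e.g.\ $\tilde{w}<-2x_2^*$) the numerator of $z$ is negative on the whole slice and ``maximal numerator over minimal denominator'' does not by itself maximize the ratio. Your explicit sign computation $\mathrm{sgn}(dz/dt)=-\mathrm{sgn}\bigl(2B+(1-\rho)(A+t)\bigr)$, together with the factorization $(u_2^*-u_1)\bigl[(3+\rho)u_2^*-(1+3\rho)u_1\bigr]$ at $t=0$ and linearity in $t$, handles that case uniformly for all $u_1<u_2^*$ and $0<\rho<1$, so your version of this step is the more airtight one.
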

\begin{proof}
Suppose Player II plays $\bb^*=(x_2^*,x_2^*)$. If Player I fixes $\tilde{w}<2x_2^*$ and selects a pure strategy $\ba=(x,y)$ with $x+y=\tilde{w}$, then he will wish to choose $x$ to minimize $$K(\ba,\bb^*)=2x_2^*+(\tilde{w}-2x_2^*)\Phi(z).$$ Since $\tilde{w}-2x_2^*<0$, this is equivalent to maximizing $z$. Equation (\ref{equationz}) becomes
$$z=\frac{2x_2^{*2}-x^2-(\tilde{w}-x)^2}{2\sigma_x\sqrt{(x_2^*-x)^2+2\rho(x_2^*-x)(x_2^*-\tilde{w}+x)+(x_2^*-\tilde{w}+x)^2}}.$$
The numerator, $-2x^2+2\tilde{w}x+2x_2^{*2}-\tilde{w}^2$, is maximized when $x=\frac{\tilde{w}}{2}$, while the function in the denominator under the radical is minimized when $x=\frac{\tilde{w}}{2}$. Therefore, if Player II chooses the pure strategy $\bb^*$, it is sub-optimal for Player I to play any pure strategy off the line $y=x$. Since in the one-dimensional case the strategies $(\ba^*,\bb^*)$ are a global pure equilibrium (see \cite{brams1983}), the proof is complete. 
\end{proof}

Before showing that $\ba^*, \bb^*$ is a global pure strategy equilibrium for $\sigma_x \not=\sigma_y$, we have to establish a few lemmas. 

\begin{lemma}
Suppose Player II chooses strategy $\bb^*=(x_2^*,x_2^*)$. If Player I selects pure strategy $\ba=(x_1,y_1)$ then $z(\ba,\bb^*)=0$ iff $(x_1,y_1)$ lies on the circle of radius $\sqrt{2}x_2^*$ centered at the origin. Furthermore, $x_1^2+y_1^2<2x_2^{*2}$ iff $z>0$ and  $x_1^2+y_1^2>2x_2^{*2}$ iff $z<0$.\label{lemL2GE4}
\end{lemma}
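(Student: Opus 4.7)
The plan is to simply substitute $\bb^*=(x_2^*,x_2^*)$ into the formula for $z$ given in equation~(\ref{equationz}) and read off the three claims from the sign of the resulting numerator. With $x_2=y_2=x_2^*$, the numerator of $z$ becomes
\[
x_2^{*2}+x_2^{*2}-x_1^2-y_1^2 \;=\; 2x_2^{*2}-(x_1^2+y_1^2),
\]
so $z$ has the same sign as $2x_2^{*2}-(x_1^2+y_1^2)$, and $z=0$ exactly when $x_1^2+y_1^2=2x_2^{*2}$, which is precisely the circle of radius $\sqrt{2}\,x_2^*$ centered at the origin. The two inequality statements then follow immediately: $x_1^2+y_1^2<2x_2^{*2}$ forces the numerator positive and hence $z>0$, and the reverse inequality gives $z<0$.

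The one thing I need to verify carefully is that the denominator $2\sigma_\Omega$ of $z$ is strictly positive, so that the sign of $z$ is genuinely determined by the numerator. With $\bb^*=(x_2^*,x_2^*)$,
\[
\sigma_\Omega^2 \;=\; (x_2^*-x_1)^2\sigma_x^2 + 2(x_2^*-x_1)(x_2^*-y_1)\rho\sigma_x\sigma_y + (x_2^*-y_1)^2\sigma_y^2,
\]
which is the quadratic form associated with the covariance matrix $\mathbf{\Sigma}$ evaluated at the vector $(x_2^*-x_1,\,x_2^*-y_1)$. Since $\mathbf{\Sigma}$ is positive definite (we have $\rho>0$ and must also have $\rho<1$ for $\mathbf{\Sigma}$ to be a valid covariance matrix), this form is strictly positive whenever $(x_1,y_1)\neq(x_2^*,x_2^*)$, which is the only case where $z$ is defined. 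Note that the degenerate case $\ba=\bb^*$ satisfies $x_1^2+y_1^2=2x_2^{*2}$, so it is consistent with (and lies on) the circle of radius $\sqrt{2}\,x_2^*$.

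There is really no hard step here; the lemma is essentially a direct computation from~(\ref{equationz}). The only subtlety worth flagging is the positivity of $\sigma_\Omega$, which is immediate from positive-definiteness of the covariance matrix. Having established this, all three assertions of the lemma fall out at once from the sign analysis of the affine-in-$(x_1^2+y_1^2)$ numerator.
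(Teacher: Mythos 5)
Your proposal is correct and follows exactly the paper's argument: substitute $\bb^*=(x_2^*,x_2^*)$ into~(\ref{equationz}) to get $z=\bigl(2x_2^{*2}-(x_1^2+y_1^2)\bigr)/(2\sigma_\Omega)$ and read off the sign. The paper simply calls the rest ``straightforward,'' while you additionally (and correctly) note that $\sigma_\Omega>0$ by positive definiteness of $\mathbf{\Sigma}$, which is a worthwhile detail but not a different approach.
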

\begin{proof}
From (\ref{equationz})
$$z(\ba,\bb^*)=\frac{2x_2^{*2}-(x_1^2+y_1^2)}{2\sigma_\Omega},$$ and the proof is straightforward.
\end{proof}

\begin{lemma}
If another pure strategy $\ba=(x_1,y_1) \not=-\bb^*=(-x_2^*,-x_2^*)$ exists such that $K(\ba,\bb^*)\le 0$, then $x_1+y_1< 0$ and either $$
x_1^2+y_1^2<2x_2^{*2}
\quad \text{ or } \quad
x_1+y_1\le-2x_2^*.$$\label{lemL2GE3}
\end{lemma}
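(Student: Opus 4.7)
The plan is to work with the simplified expression for $K(\ba,\bb^*)$ that is already in use in this section. Since $\bb^* = (x_2^*, x_2^*)$ with $x_2^* > 0$ (by Lemma~\ref{lemPositive}), we have
\begin{equation*}
K(\ba,\bb^*) = 2x_2^* + (x_1+y_1-2x_2^*)\Phi(z),
\end{equation*}
where $z = z(\ba,\bb^*)$ is given by (\ref{equationz}), and $\Phi(z)\in(0,1)$ strictly whenever $\ba\ne\bb^*$. Both conclusions of the lemma will be obtained by contrapositive from the hypothesis $K(\ba,\bb^*)\le 0$.

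For the first claim, that $x_1+y_1<0$, I would assume $x_1+y_1\ge 0$ and show $K(\ba,\bb^*)>0$. Split into two sub-cases on the sign of $x_1+y_1-2x_2^*$. If $x_1+y_1\ge 2x_2^*$, the second summand is non-negative and already $K\ge 2x_2^*>0$. If $0\le x_1+y_1<2x_2^*$, the coefficient of $\Phi(z)$ is strictly negative, so $\Phi(z)<1$ gives
\[
K(\ba,\bb^*) \;>\; 2x_2^* + (x_1+y_1-2x_2^*) \;=\; x_1+y_1 \;\ge\; 0.
\]
Either way $K>0$, contradicting $K\le 0$.

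For the disjunction, I would assume both $x_1^2+y_1^2\ge 2x_2^{*2}$ and $x_1+y_1>-2x_2^*$ and derive $K(\ba,\bb^*)>0$. From the first part, we already have $x_1+y_1<0$, so $0<2x_2^*-x_1-y_1<4x_2^*$. Applying Lemma~\ref{lemL2GE4}, the hypothesis $x_1^2+y_1^2\ge 2x_2^{*2}$ forces $z\le 0$, hence $\Phi(z)\le \tfrac12$. Then
\[
(2x_2^*-x_1-y_1)\Phi(z) \;\le\; \tfrac{1}{2}(2x_2^*-x_1-y_1) \;<\; \tfrac12\cdot 4x_2^* \;=\; 2x_2^*,
\]
which rearranges to $K(\ba,\bb^*)>0$, again a contradiction.

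The main technical care is on the boundary cases. If $z=0$ then $\Phi(z)=\tfrac12$ exactly, so the strict inequality above must come from $x_1+y_1>-2x_2^*$ being \emph{strict}; likewise if $x_1+y_1=0$ in the first part, strictness comes from $\Phi(z)<1$. Tracking where each strict inequality originates is the only delicate bookkeeping. The hypothesis $\ba\ne -\bb^*$ in the statement appears not to be needed for the implication itself (the point $-\bb^*$ satisfies the conclusion, with equality $x_1+y_1=-2x_2^*$); it merely flags that one already knows $K(-\bb^*,\bb^*)=0$ from Lemma~\ref{lemOppositePure}, and the lemma is aimed at locating the \emph{other} dangerous strategies for Player~I.
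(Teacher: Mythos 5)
Your proof is correct and follows essentially the same route as the paper's: both parts argue by contradiction, the first using that Player II wins with positive probability so $K(\ba,\bb^*)$ is a convex combination of $2x_2^*>0$ and $x_1+y_1\ge 0$ with positive weight on the former, and the second using Lemma~\ref{lemL2GE4} to get $\Phi(z)\le\tfrac12$ together with the strict bound $x_1+y_1>-2x_2^*$ (the paper writes this via $x_1+y_1-2x_2^*=-4x_2^*+\epsilon$, you via direct inequalities, which is only a cosmetic difference). Your closing observation that the hypothesis $\ba\ne-\bb^*$ is not actually needed for the implication is accurate.
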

\begin{proof}
Suppose $x_1+y_1\ge0$. Because the net offer of Player II, $2x_2^*>0$, and Player II has a positive  probability $p$ of being chosen by the arbiter, the expected payoff $$K(\ba,\bb^*)=p(2x_2^*)+(1-p)(x_1+y_1)>0.$$This contradicts our assumption.\\
Suppose $x_1^2+y_1^2\ge 2x_2^{*2}$. Then $z\le 0$ by Lemma~\ref{lemL2GE4} and $\Phi(z)\le\frac{1}{2}$. Suppose also that $x_1+y_1>-2x_2^*$. Then
$$x_1+y_1-2x_2^* = -4x_2^*+\epsilon$$ for some $0<\epsilon<2x_2^*$. But then 
\begin{alignat*}{2}
K(\ba,\bb^*) &= 2x_2^* + (x_1+y_1-2x_2^*)\Phi(z)\\
&= 2x_2^* -(4x_2^*-\epsilon)\Phi(z)\\
&\ge 2x_2^* -(4x_2^*-\epsilon)\frac{1}{2}\\
&= \frac{\epsilon}{2}\\
&>0
\end{alignat*}
and this contradicts our assumption.
\end{proof}

The following general lemma, a special case of which was used in Theorem~\ref{thmL2GE5} will be needed subsequently.
\begin{lemma}
Let $$f(x)=\frac{g(x)}{h(x)}$$ where both $g(x)$ and $h(x)$ are continuously differentiable functions. Suppose $g(x)$ has a unique global maximum at $x_g$ and $h(x)$ has a unique global minimum at $x_h$ (and neither function has any other local extrema). Then $f$ is maximized at some point between $x_g$ and $x_h$. \label{lemMaxFracFunction}
\end{lemma}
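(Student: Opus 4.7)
The plan is to translate the hypotheses into monotonicity statements and then read off the sign of $f'$ on the two unbounded tails.

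Without loss of generality assume $x_g \le x_h$ (the other case is symmetric by relabeling). Because $g$ has a unique global maximum at $x_g$ and no other local extrema, $g$ must be non-decreasing on $(-\infty,x_g]$ and non-increasing on $[x_g,\infty)$; otherwise continuity of $g'$ would force another interior critical point. By the same argument applied to $h$, the function $h$ is non-increasing on $(-\infty,x_h]$ and non-decreasing on $[x_h,\infty)$. (In the intended applications $g$ and $h$ are positive on the region of interest; I will assume this throughout, since otherwise the quotient can blow up and ``maximized'' is not well defined.)

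Next I would compute
\begin{equation*}
f'(x) \;=\; \frac{g'(x)h(x) - g(x)h'(x)}{h(x)^2}
\end{equation*}
and do a sign analysis on the two tails. On the interval $(-\infty,x_g]$ we have $g'(x)\ge 0$ and, because $x\le x_g\le x_h$, also $h'(x)\le 0$; together with $g,h>0$ this yields $g'h\ge 0$ and $-gh'\ge 0$, so $f'(x)\ge 0$ throughout $(-\infty,x_g]$. Symmetrically, on $[x_h,\infty)$ we have $g'(x)\le 0$ (since $x\ge x_h\ge x_g$) and $h'(x)\ge 0$, giving $f'(x)\le 0$ on $[x_h,\infty)$.

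From these two monotonicity statements it follows that $f$ is non-decreasing on $(-\infty,x_g]$ and non-increasing on $[x_h,\infty)$, so any global maximizer of $f$ must lie in the closed interval $[x_g,x_h]$, which is the required conclusion. The only genuine obstacle is the sign hypothesis on $g$ and $h$: the cleanest version of the lemma really needs $g$ and $h$ positive (or at least of constant sign with $h\ne 0$) on the domain, because otherwise one of the two cross-terms $g'h$ or $-gh'$ can flip sign and the tail monotonicity breaks. I would therefore either add this hypothesis explicitly or note that it is met in every application made of the lemma in the sequel (where $g$ is a quadratic-type expression positive on the relevant range and $h$ is a standard deviation, hence strictly positive).
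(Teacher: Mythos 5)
Your proof is correct and follows essentially the same route as the paper's: both establish that $f$ is monotone on the two unbounded tails outside $[\min\{x_g,x_h\},\max\{x_g,x_h\}]$ and conclude the maximum lies in that interval, the only cosmetic difference being that you read the monotonicity off the sign of $f'=(g'h-gh')/h^2$ while the paper argues directly that a decreasing numerator over an increasing denominator decreases. Your observation that positivity of $g$ and $h$ is silently required (and is satisfied in every application made of the lemma in the sequel) is accurate; the paper's own statement and proof omit this hypothesis.
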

\begin{proof}
Let $\bar{x}=\max\{x_g,x_h\}$ and $\underline{x}=\min\{x_g,x_h\}$. Let $u\ge 0$. Certainly $\bar{g}(u)=g(\bar{x}+u)$ and $\underline{g}(u)=g(\underline{x}-u)$ are both decreasing functions of $u$. Similarly $\bar{h}(u)=h(\bar{x}+u)$ and $\underline{h}(u)=h(\underline{x}-u)$ are both increasing functions of $u$.
Thus on the interval $(-\infty,\underline{x}]$, $f(x)$ is maximized at $\underline{x}$ and on the interval $[\bar{x},\infty)$, $f(x)$ is maximized at $\bar{x}$. If we are looking for the maximum value of $f$, we need not consider any points in $(-\infty,\underline{x})\cup(\bar{x},\infty)$; in other words $f$ attains its maximum value somewhere on the interval $[\underline{x},\bar{x}]$.
\end{proof}

We now proceed to show that if Player II chooses pure strategy $\bb^*=(x_2^*,x_2^*)$ and I deviates from $\ba^*=(-x_2^*,-x_2^*)$ to any other pure strategy $(x_1,y_1)$ then it will simply result in a positive expected payoff. %The points on the circle $x^2+y^2=2x_2^*$, outside the circle and within the circle are covered in Lemma~\ref{lemL2OnCircle},  Corollary~\ref{corL2OutCircle} and Lemma~\ref{lemL2InCircle} respectively. 

\begin{lemma}
Suppose $\rho>0$, and $\sigma_x<\sigma_y$. If Player II plays pure strategy $\bb^*=(x_2^*,x_2^*)$ then the only pure strategy on the circle $x_1^2+y_1^2=2x_2^{*2}$ where $K(\ba,\bb^*)\le 0$ is $\ba=(-x_2^*,-x_2^*).$ \label{lemL2OnCircle}
\end{lemma}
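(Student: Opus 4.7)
The plan is to exploit the fact that the circle $x_1^2+y_1^2=2x_2^{*2}$ is exactly the locus where $z$ vanishes, which collapses the payoff formula to something completely elementary.

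First, I would invoke Lemma~\ref{lemL2GE4} to conclude that on this circle $z(\ba,\bb^*)=0$, so $\Phi(z)=1/2$. Substituting into the payoff expression derived just before Lemma~\ref{lemPositive}, and using $x_2^*=y_2^*$ so that $x_2+y_2=2x_2^*$, I get
\begin{equation*}
K(\ba,\bb^*)=2x_2^*+(x_1+y_1-2x_2^*)\tfrac{1}{2}=x_2^*+\tfrac{1}{2}(x_1+y_1).
\end{equation*}
Hence $K(\ba,\bb^*)\le 0$ on the circle is equivalent to the linear condition $x_1+y_1\le -2x_2^*$.

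Next I would apply Cauchy--Schwarz (or equivalently the AM--QM inequality) to the constraint $x_1^2+y_1^2=2x_2^{*2}$, which yields $(x_1+y_1)^2\le 2(x_1^2+y_1^2)=4x_2^{*2}$, so $x_1+y_1\ge -2x_2^*$, with equality only when $x_1=y_1$. Combining this with the required inequality $x_1+y_1\le -2x_2^*$ forces $x_1+y_1=-2x_2^*$ and $x_1=y_1$, hence $\ba=(-x_2^*,-x_2^*)=-\bb^*$, which is the only point on the circle at which $K$ attains $0$ and the sole point where $K(\ba,\bb^*)\le 0$.

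There is essentially no obstacle here: the argument does not invoke the hypotheses $\rho>0$ or $\sigma_x<\sigma_y$ at all, which are presumably carried in the statement because they are needed for the companion lemmas that handle the cases off the circle. If I wanted to be extra careful, I would simply observe at the end that the lemma's hypotheses were never used, so the conclusion holds under the mild blanket assumption $x_2^*>0$ coming from Lemma~\ref{lemPositive}.
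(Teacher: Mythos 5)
Your proposal is correct and follows essentially the same route as the paper: both reduce the payoff on the circle to $K(\ba,\bb^*)=x_2^*+\tfrac{1}{2}(x_1+y_1)$ via $z=0$ and $\Phi(0)=\tfrac{1}{2}$, and then observe that $x_1+y_1$ attains its minimum $-2x_2^*$ on the circle only at $(-x_2^*,-x_2^*)$. Your Cauchy--Schwarz step merely makes explicit what the paper dispatches with ``geometrically we can see,'' and your observation that $\rho>0$ and $\sigma_x<\sigma_y$ are not actually needed here is accurate.
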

\begin{proof}
If $x_1^2+y_1^2=2x_2^{*2}$, $z(\ba,\bb^*)=0$, so
\begin{alignat*}{2}
K(\ba,\bb^*) &= 2x_2^*+(x_1+y_1 - 2x_2^*)\Phi(0)\\
&=2x_2^*+(x_1+y_1 - 2x_2^*)\frac{1}{2}\\
&=x_2^*+\frac{x_1+y_1}{2}.
\end{alignat*}
Geometrically we can see that $x_1+y_1$ is minimized on the circle $x_1^2+y_1^2=2x_2^{*2}$ at $(-x_2^*,-x_2^*)$.
\end{proof}

Against Player II's strategy $\bb^*=(x_2^*,x_2^*)$, any pure strategy $\ba=(x_1,y_1)$ may be represented in terms of $r$ and $\theta$ as $(x_2^*+r\cos\theta, x_2^*+r\sin\theta)$. This will greatly facilitate the remaining proofs\footnote{For convenience we will define $t(\theta):= -(\cos\theta+\sin\theta)$ and $\sigma_\theta^2 := \sigmathetasq$. Note that $t(\theta)=-\sqrt{2}\sin(\theta+\frac{\pi}{4})$.}. In this representation, with $t(\theta)=-(\cos\theta+\sin\theta)$, we can rewrite
\begin{equation}
K(\ba, \bb^*) = 2x_2^* + r(\cos\theta+\sin\theta)\Phi(z) = 2x_2^*-rt(\theta)\Phi(z) \label{eqKabrtheta}
\end{equation}
and
\begin{equation}\label{eqzrtheta}
z(r,\theta) = \frac{2x_2^{*2}-(x_2^*+r\cos\theta)^2-(x_2^*+r\sin\theta)^2}{2r\sqrt{\sigmathetasq}}=\frac{2x_2^*t(\theta)-r}{2\sqrt{\sigma_\theta^2}}
\end{equation}

\begin{figure}[h!]
\centering
\includegraphics[scale=0.31]{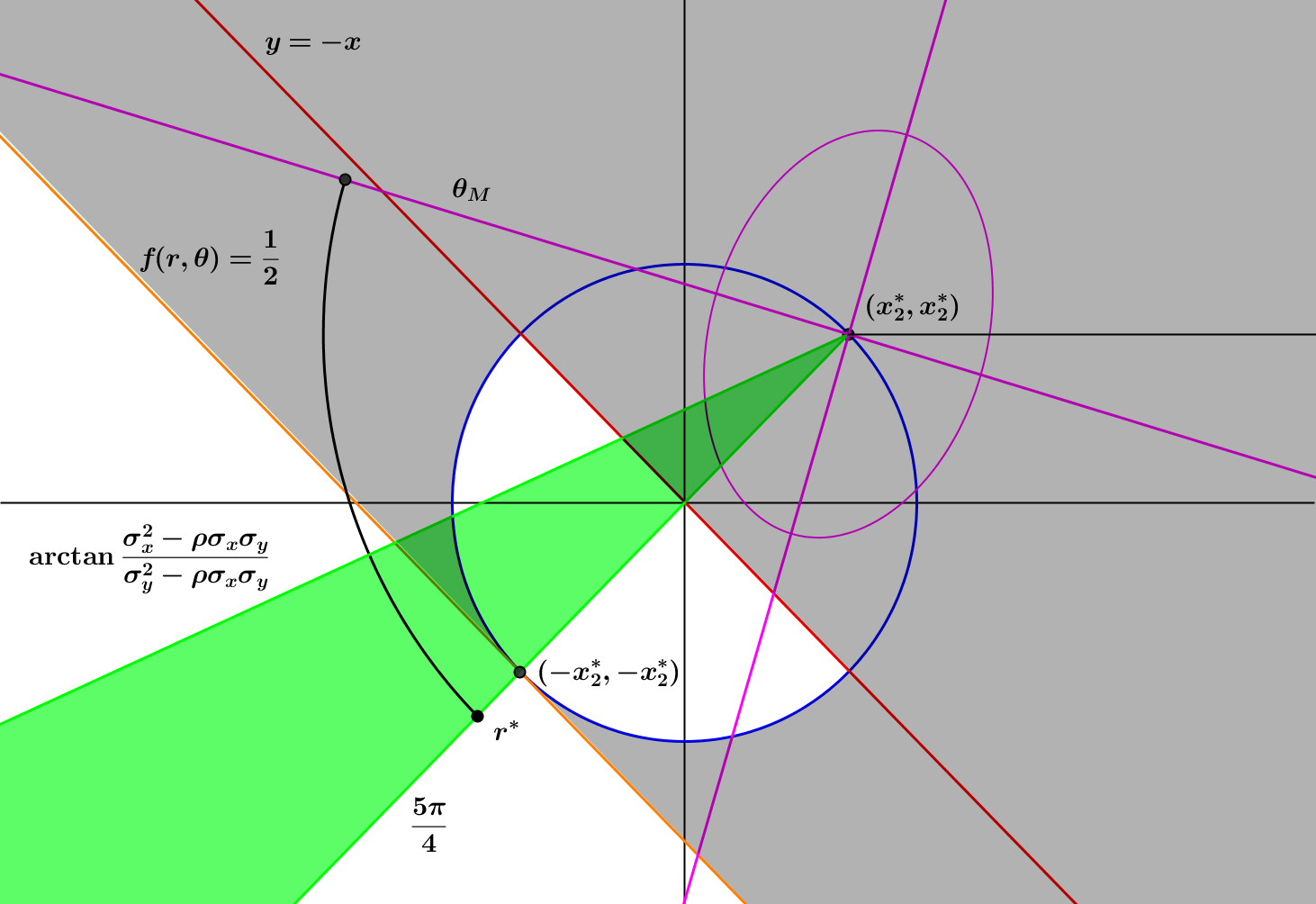}
\caption{A sketch of some features of the problem. With $\bb^*=(x_2^*,x_2^*)$ fixed, the circle in blue are the points where $z=0$, the ellipse in magenta shows the angle of the minor axis of the bivariate distribution (see Lemma~\ref{lemL2GE2}). The two rays in green correspond to Lemma~\ref{lemL2GE1}, and the region between them is where $K(\ba,\bb^*)$ is minimized. From Lemma~\ref{lemL2GE3}, for any point $\ba$ in the region in gray, $K(\ba,\bb^*)>0$. In Lemma~\ref{lemL2GE2} we fix $r^*$ and show that for all points along the arc shown, the expected payoff is positive.}
\end{figure}

\begin{lemma}
Suppose $\rho>0$, and $\sigma_x<\sigma_y$. 
For all pure strategies {$\ba =(x_2^*+r \cos \theta, x_2^*+r\sin\theta)$}, if $K(\ba,\bb^*)$ is minimized then $\theta \in [\arctan\frac{\sigma_x^2-\rho\sigma_x\sigma_y}{\sigma_y^2-\rho\sigma_x\sigma_y} ,  \frac{5\pi}{4}]$. \label{lemL2GE1}
\end{lemma}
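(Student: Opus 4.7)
The plan is to parametrize $\ba=(x_2^*+r\cos\theta,x_2^*+r\sin\theta)$ and use the expressions (\ref{eqKabrtheta})--(\ref{eqzrtheta}) to rule out all $\theta$ outside the claimed interval. I read the endpoint as an angle in the third quadrant, so effectively $\theta_0=\pi+\arctan\frac{\sigma_x^2-\rho\sigma_x\sigma_y}{\sigma_y^2-\rho\sigma_x\sigma_y}\in(3\pi/4,5\pi/4]$, consistent with the accompanying figure.

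First, I would invoke Lemma~\ref{lemL2GE3}: any $\ba\neq-\bb^*$ with $K(\ba,\bb^*)\le 0$ satisfies $x_1+y_1<0$, which in polar form becomes $rt(\theta)>2x_2^*$. Since $r>0$ and $x_2^*>0$, this forces $t(\theta)>0$, i.e., $\theta\in(3\pi/4,7\pi/4)$. Because $K(\ba^*,\bb^*)=0$, the global minimum of $K$ is at most $0$, so any minimizer must lie in this arc.

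Next, to exclude $\theta\in(5\pi/4,7\pi/4)$, I would exploit the reflection $5\pi/4\pm\psi$. A direct calculation gives $t(5\pi/4+\psi)=t(5\pi/4-\psi)=\sqrt{2}\cos\psi$, whereas expanding $\sigmathetasq$ yields $\sigma^2_{5\pi/4+\psi}-\sigma^2_{5\pi/4-\psi}=(\sigma_y^2-\sigma_x^2)\sin 2\psi>0$ for $\psi\in(0,\pi/2)$, since $\sigma_x<\sigma_y$. Paired with joint optimization over $r$, this asymmetry shows that for every candidate $\theta=5\pi/4+\psi$ one can attain a strictly smaller value of $K$ at the reflected angle $5\pi/4-\psi$ with suitably adjusted $r$, so no $\theta>5\pi/4$ can be a minimizer.

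Finally, to rule out $\theta\in(3\pi/4,\theta_0)$, I would carry out the analogous first-order analysis. At any interior minimum, $\partial K/\partial r=0$ gives $\Phi(z)=r\phi(z)/(2\sigma_\theta)$, and $\partial K/\partial\theta=0$ gives $t'(\theta)\Phi(z)+t(\theta)\phi(z)\,\partial z/\partial\theta=0$, where $t'(\theta)=\sin\theta-\cos\theta$ and $\partial z/\partial\theta$ involves $d\sigma_\theta^2/d\theta=(\sigma_y^2-\sigma_x^2)\sin 2\theta+2\rho\sigma_x\sigma_y\cos 2\theta$. Substituting the first relation into the second eliminates $\Phi(z)$ and $\phi(z)$, collapsing the system to a single trigonometric equation whose relevant root is $\theta_0$, the condition $\tan\theta=\frac{\sigma_x^2-\rho\sigma_x\sigma_y}{\sigma_y^2-\rho\sigma_x\sigma_y}$ in the third quadrant. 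For $\theta<\theta_0$ no critical point exists, and a boundary sign check on $\partial K/\partial\theta$ confirms $K$ is strictly decreasing through that interval, hence no minimizer can lie there.

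The main obstacle I anticipate is the algebra in the third step: extracting the arctan cleanly from the combined first-order equations requires patient manipulation, especially since the exponential/transcendental $\Phi$ and $\phi$ terms only cancel after using $\partial K/\partial r=0$ to substitute $\Phi(z)$ in terms of $\phi(z)$. Once that cancellation is performed, however, the remaining equation is a polynomial identity in $\sin\theta$ and $\cos\theta$ from which $\tan\theta_0$ emerges directly.
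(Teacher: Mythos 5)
Your step 1 is fine, and your reading of the endpoint as the third-quadrant angle $\theta_0=\pi+\arctan\frac{\alpha'}{\beta'}$ (with $\alpha'=\sigma_x^2-\rho\sigma_x\sigma_y$, $\beta'=\sigma_y^2-\rho\sigma_x\sigma_y$) is the intended one. But steps 2 and 3 both contain genuine gaps. In step 2, the reflection $\theta=\frac{5\pi}{4}\pm\psi$ does preserve $t(\theta)$ and does shift $\sigma_\theta^2$ by $(\sigma_y^2-\sigma_x^2)\sin 2\psi$, but the conclusion that the smaller-variance side attains a smaller $\min_r K$ does not follow from this asymmetry alone: with $t$ fixed, the envelope theorem gives $\frac{d}{d\sigma_\theta}\min_r K = rt\phi(z)\frac{2x_2^*t-r}{2\sigma_\theta^2}$ at the minimizing $r$, whose sign is the sign of $z$ there; this is exactly zero at $\psi=0$ (the global minimizer $-\bb^*$ sits on the circle $z=0$), and determining its sign for $\psi\neq 0$ is precisely the work you have skipped. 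Step 3 rests on a false premise: substituting $\Phi(z)=\frac{r\phi(z)}{2\sigma_\theta}$ into $\partial K/\partial\theta=0$ does \emph{not} eliminate $r$ --- one is left with $\frac{t'r}{2}+x_2^*tt'-\frac{t(2x_2^*t-r)\sigma_\theta'}{2\sigma_\theta}=0$, still a relation between $r$ and $\theta$ --- and the only critical point of $K(\cdot,\bb^*)$ is the global minimizer at $\theta=\frac{5\pi}{4}$, $r=2\sqrt{2}x_2^*$. The angle $\theta_0$ is not a critical angle of $K$ and cannot be extracted from the first-order conditions.

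The paper obtains the interval by a different mechanism, which is why $\theta_0$ has the form it does: fix the net offer $x+y=\tilde w<2x_2^*$, so that minimizing $K$ along that line is equivalent to maximizing $z$, a ratio whose numerator is maximized at $x=\tilde w/2$ (the ray $\theta=\frac{5\pi}{4}$) and whose denominator is minimized on the line of slope $\frac{\alpha'}{\beta'}$ through $\bb^*$ (the ray $\theta=\theta_0$). Lemma~\ref{lemMaxFracFunction} then sandwiches the per-line maximizer of $z$ between these two points, and since $\tilde w$ is arbitrary the minimizer of $K$ must lie in the wedge $[\theta_0,\frac{5\pi}{4}]$. So $\arctan\frac{\alpha'}{\beta'}$ arises from the quadratic minimization of the denominator $r^2\sigma_\theta^2$ along each slice, not from $\nabla K=0$. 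To repair your argument you would need either to prove the monotonicity claim in step 2 (which requires controlling the sign of $z$ at the constrained minimizer) or to adopt the slicing-and-sandwich argument.
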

\begin{proof}
As in Theorem~\ref{thmL2GE5},  suppose Player I first fixes his net offer $x+y=\tilde{w}<2x_2^*$. 
With $\tilde{w}$ fixed, he now chooses $x$ in order to minimize the the expected payoff $$K(\ba,\bb^*)=2x_2^*+(\tilde{w}-2x_2^*)\Phi(z).$$ Because $\tilde{w}-2x_2^*<0$, he chooses $x$ to maximize $$z=\frac{2x_2^{*2}-x^2-(\tilde{w}-x)^2}{2\sqrt{\sigma_x^2(x_2^*-x)^2+2\rho\sigma_x\sigma_y(x_2^*-x)(x_2^*-\tilde{w}+x)+\sigma_y^2(x_2^*-\tilde{w}+x)^2}}.$$ As in Theorem~\ref{thmL2GE5}, the numerator is maximized when $x=\frac{\tilde{w}}{2}$, which corresponds to $\theta=\frac{5\pi}{4}$. The denominator has a unique minimum when 
$$x
=\frac{x_2^*(\alpha'-\beta')+\beta'\tilde{w}}{\alpha'+\beta'}=\frac{x_2^*(\alpha'-\beta')+\beta'(x+y)}{\alpha'+\beta'},$$ where $\alpha'=\sigma_x^2-\rho\sigma_x\sigma_y$ and $ \beta'=\sigma_y^2-\rho\sigma_x\sigma_y$. 
Solving for $y$, we get $$y=x_2^*\frac{\beta'-\alpha'}{\beta'}+\frac{\alpha'}{\beta'}x.$$
So for all $\tilde{w}$, the set of offers which minimize the denominator is a line with a slope $\frac{\alpha'}{\beta'}$ which passes through $\bb^*$. 
Observe that $\frac{\alpha'}{\beta'}<1$, so $\arctan\frac{\alpha'}{\beta'}<\frac{5\pi}{4}$. Since $\tilde{w}<2x_2^*$ is arbitrary, by Lemma~\ref{lemMaxFracFunction} we know that $K(\ba,\bb^*)$ is minimized for some $(x_2^*+r\cos\theta,x_2^*+r\sin\theta)$ where $\theta\in[\arctan\frac{\alpha'}{\beta'},\frac{5\pi}{4}]$.
\end{proof}

\begin{figure}[h!]
\centering
\includegraphics[scale=0.15]{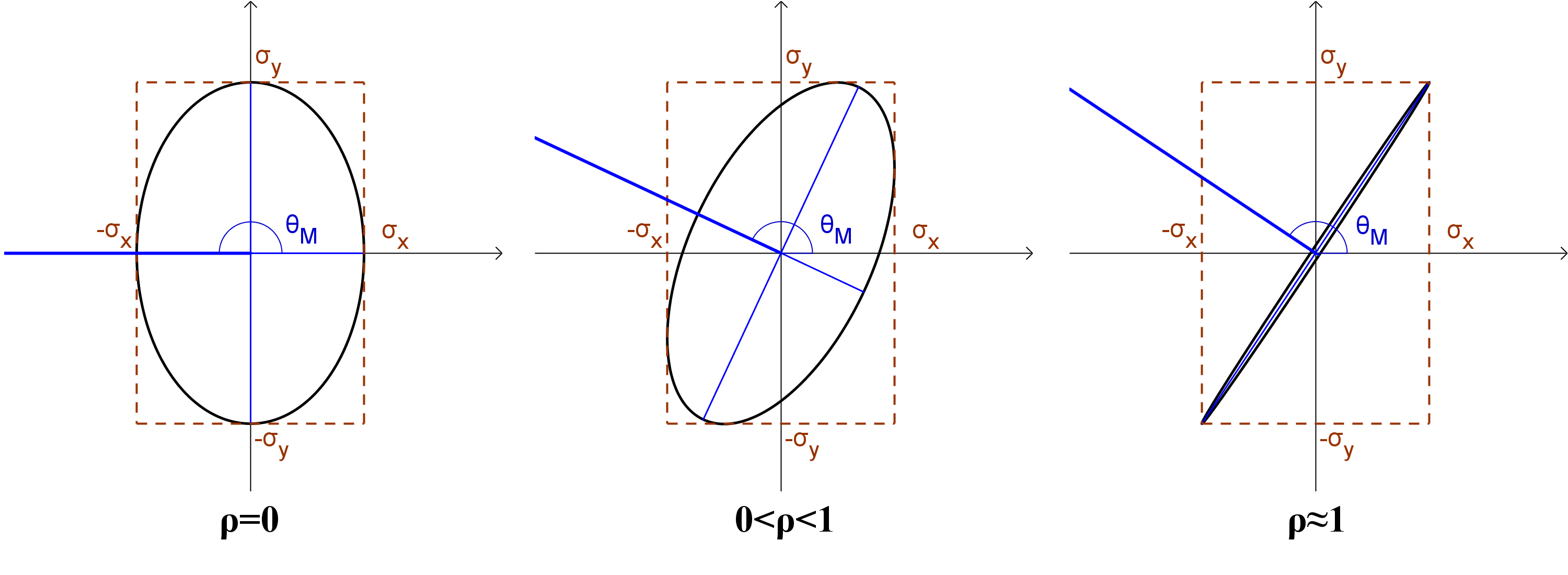}
\caption{An illustration of the angle $\theta_M$ of the minor axis of the bivariate normal distribution for $\sigma_x<\sigma_y$.}
\end{figure}
We will pause briefly to consider the bivariate normal distribution with $\rho>0$ and $\sigma_x<\sigma_y$. In particular, if we look at a contour of constant density, we will have an ellipse. The minor axis of this ellipse will be relevant in Lemma~\ref{lemL2GE2}.
An analysis of the geometry is enough to convince one that the angle $\theta_M>\frac{3\pi}{4}$. Furthermore, this is the direction along which the distribution has the minimum variance.

\begin{lemma}\label{lemthetaM}
Let $\rho>0, \sigma_x<\sigma_y$, $(\xi,\eta)\sim N(\mathbf{0},\mathbf{\Sigma})$, $\Omega=\xi\cos\theta+\eta\sin\theta$ and $\sigma_\theta^2=Var(\Omega)$. Then $\sigma_\theta^2$ is minimized at %Let the angle of the minor axis be $\theta_M$ and the angle of the major axis be $\theta^M$. 
$$\theta_M=\arctan\left(\frac{\sigma_y^2-\sigma_x^2}{2\rho\sigma_x\sigma_y}- \sqrt{\left(\frac{\sigma_y^2-\sigma_x^2}{2\rho\sigma_x\sigma_y}\right)^2+1}\right)>\frac{3\pi}{4}.$$ 
\end{lemma}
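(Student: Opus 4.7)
The plan is direct calculus on the quadratic form in $(\cos\theta,\sin\theta)$. First, applying double-angle identities I would rewrite
$\sigma_\theta^2 = \tfrac{1}{2}(\sigma_x^2+\sigma_y^2) + \tfrac{1}{2}(\sigma_x^2-\sigma_y^2)\cos(2\theta) + \rho\sigma_x\sigma_y\sin(2\theta),$
which is a $\pi$-periodic function of $\theta$. Differentiating in $\theta$ and setting the result to zero yields the critical-point equation
$(\sigma_y^2-\sigma_x^2)\sin(2\theta) + 2\rho\sigma_x\sigma_y\cos(2\theta) = 0.$

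Next, substituting $t=\tan\theta$ via $\sin(2\theta)=2t/(1+t^2)$ and $\cos(2\theta)=(1-t^2)/(1+t^2)$ and clearing denominators produces the quadratic
$\rho\sigma_x\sigma_y\,t^2 - (\sigma_y^2-\sigma_x^2)\,t - \rho\sigma_x\sigma_y = 0,$
whose roots, by the quadratic formula, are exactly
$t_\pm = \frac{\sigma_y^2-\sigma_x^2}{2\rho\sigma_x\sigma_y} \pm \sqrt{\left(\frac{\sigma_y^2-\sigma_x^2}{2\rho\sigma_x\sigma_y}\right)^{\!2}+1}.$
Since the product of roots equals $-\rho\sigma_x\sigma_y/(\rho\sigma_x\sigma_y)=-1$, the two slopes $t_\pm$ are perpendicular --- they represent, respectively, the major and minor axes of the level-set ellipse of the density, which is exactly what diagonalizing $\mathbf{\Sigma}$ would produce.

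To decide which root corresponds to the minimum, I would compare the variance at the two test angles $\pi/4$ and $3\pi/4$: straightforward substitution gives $\sigma_{\pi/4}^2 = \tfrac{1}{2}(\sigma_x^2+\sigma_y^2)+\rho\sigma_x\sigma_y$ and $\sigma_{3\pi/4}^2 = \tfrac{1}{2}(\sigma_x^2+\sigma_y^2)-\rho\sigma_x\sigma_y$. Since $\rho>0$, the second is strictly smaller, so the minor axis lies in the half-plane of negative slope and hence corresponds to the negative root. Setting $a=(\sigma_y^2-\sigma_x^2)/(2\rho\sigma_x\sigma_y)>0$, I would check directly that $t_+>0$ while $t_- = a-\sqrt{a^2+1}= -1/(a+\sqrt{a^2+1})\in(-1,0)$, which is precisely the claimed formula for $\tan\theta_M$.

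Finally, to establish the strict inequality $\theta_M>\tfrac{3\pi}{4}$ independently of branch conventions, I would evaluate the derivative of $\sigma_\theta^2$ at $\theta=\tfrac{3\pi}{4}$, which reduces to $-(\sigma_y^2-\sigma_x^2)<0$ under $\sigma_x<\sigma_y$; the variance function is therefore still strictly decreasing at $\tfrac{3\pi}{4}$, so its local minimum on $(\tfrac{\pi}{2},\pi)$ is attained strictly to the right. The only real subtlety --- and the thing I would flag explicitly --- is branch selection: the expression $\arctan(t_-)$ in the statement must be interpreted as the unique representative of the minor-axis direction lying in $(\tfrac{3\pi}{4},\pi)$, not the principal-branch value in $(-\tfrac{\pi}{4},0)$. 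Everything else is routine algebra, so no step poses a real obstacle.
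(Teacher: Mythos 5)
Your proposal is correct and follows essentially the same route as the paper: the double-angle rewrite of $\sigma_\theta^2$, the critical-point equation, the quadratic in $\tan\theta$ whose roots have product $-1$, selection of the negative root as the minor-axis direction, and the conclusion $\theta_M>\frac{3\pi}{4}$. The only cosmetic differences are that the paper identifies the minimizer by comparing the two critical values directly (via $\sin\theta_M<0<\cos\theta_M$ and $\sin\theta^M>\cos\theta^M>0$) and gets $\theta_M>\frac{3\pi}{4}$ from $\tan\theta_M>-1$, whereas you use the test angles $\frac{\pi}{4}$, $\frac{3\pi}{4}$ and the sign of the derivative at $\frac{3\pi}{4}$ --- both work, and your explicit flagging of the $\arctan$ branch convention addresses a point the paper leaves implicit.
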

It is helpful to realize that $\theta_M$ is the angle of the minor axis of the ellipse formed by any constant-density contour of $f(\xi,\eta)$. The major axis will have the angle $\theta^M$, referenced in the proof.
\begin{proof}
%The exact form for this angle $\theta_M$ is derived in the following way. 
For any angle $\theta$, $Var(\Omega)$ is given by  
\begin{alignat*}{2}
\sigma_\theta^2 &= \sigma_x^2\cos^2\theta + 2\rho\sigma_x\sigma_y\cos\theta\sin\theta + \sigma_y^2\sin^2\theta\\
&=\frac{\sigma_x^2+\sigma_y^2}{2}+\rho\sigma_x\sigma_y\sin2\theta + \frac{\sigma_x^2-\sigma_y^2}{2}\cos2\theta.
\end{alignat*}
This is minimized or maximized when the derivative 
$$2\rho\sigma_x\sigma_y\cos2\theta + (\sigma_y^2-\sigma_x^2)\sin2\theta=0,$$
or equivalently when
$$\tan2\theta = \frac{-2\rho\sigma_x\sigma_y}{\sigma_y^2-\sigma_x^2}.$$
By trigonometric identity this is equivalent to
$$\frac{2\tan\theta}{1-\tan^2\theta}= \frac{-2\rho\sigma_x\sigma_y}{\sigma_y^2-\sigma_x^2},$$
which leads to the quadratic in $\tan \theta$
$$\tan^2\theta - \frac{\sigma_y^2-\sigma_x^2}{\rho\sigma_x\sigma_y}\tan\theta-1=0. $$
This admits solutions
$$\tan\theta = \frac{\sigma_y^2-\sigma_x^2}{2\rho\sigma_x\sigma_y}\pm \sqrt{\left(\frac{\sigma_y^2-\sigma_x^2}{2\rho\sigma_x\sigma_y}\right)^2+1}.$$
Of the two solutions, one is positive while the other is negative. Let $$\tan\theta_M = \frac{\sigma_y^2-\sigma_x^2}{2\rho\sigma_x\sigma_y}- \sqrt{\left(\frac{\sigma_y^2-\sigma_x^2}{2\rho\sigma_x\sigma_y}\right)^2+1}, \text{ and } \tan\theta^M = \frac{\sigma_y^2-\sigma_x^2}{2\rho\sigma_x\sigma_y}+ \sqrt{\left(\frac{\sigma_y^2-\sigma_x^2}{2\rho\sigma_x\sigma_y}\right)^2+1}.
 $$
One can of course verify that $$\tan\theta_M=-\frac{1}{\tan\theta^M}.$$
Since  $\tan\theta^M>1$, $-1<\tan\theta_M<0$. Hence $\sin\theta^M>\cos\theta^M>0$ and $\sin\theta_M<0<\cos\theta_M$. Therefore $\sigma^2_{\theta_M}<\sigma^2_{\theta^M}$, so $\theta_M$ minimizes $\sigma_\theta^2$ while $\theta^M$ maximizes $\sigma_\theta^2$. Finally we note that because $\tan\theta_M > -1$,
$$\theta_M > \arctan (-1)=\frac{3\pi}{4}.$$
\end{proof}

\begin{lemma}
Suppose $\rho>0$, and $\sigma_x<\sigma_y$.
For any pure strategy $\ba =(x_2^*+r \cos \theta, x_2^*+r\sin\theta) \not=-\bb^*=(x_2^*,x_2^*)$ with $\theta \in [\theta_M, \frac{5\pi}{4}]$ and $x_1^2+y_1^2>2x_2^{*2}$, $K(\ba,\bb^*)> 0$. \label{lemL2GE2}
\end{lemma}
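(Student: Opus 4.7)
The plan is to combine Lemma~\ref{lemL2GE3} with a monotonicity argument in the angular coordinate $\theta$ at fixed radius $r$, which reduces the difficult case to a one-dimensional Brams-Merrill situation along the diagonal $\theta=5\pi/4$.

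First, applying the contrapositive of Lemma~\ref{lemL2GE3}: since $x_1^2+y_1^2>2x_2^{*2}$ is assumed, whenever $x_1+y_1>-2x_2^*$ we already have $K(\ba,\bb^*)>0$. Rewriting this condition in the $(r,\theta)$-parametrization, $x_1+y_1=2x_2^*-rt(\theta)$, so the only remaining case to handle is $rt(\theta)\ge 4x_2^*$. Because $t(\theta)\le\sqrt{2}$ on $[\theta_M,5\pi/4]$ (with equality only at $5\pi/4$), this already forces $r\ge 2\sqrt{2}x_2^*$.

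Next I would show that at any fixed $r\ge 2\sqrt{2}x_2^*$ the map $\theta\mapsto K(r,\theta)$ is strictly decreasing on $[\theta_M,5\pi/4]$, with the only possible loss of strictness occurring precisely at $\ba^*$. Using (\ref{eqKabrtheta})-(\ref{eqzrtheta}) one computes
\begin{equation*}
\frac{\partial K}{\partial\theta}=-rt'(\theta)\left(\Phi(z)+\frac{x_2^*t(\theta)\phi(z)}{\sigma_\theta}\right)+\frac{rt(\theta)\sigma_\theta'}{\sigma_\theta}\,z\phi(z).
\end{equation*}
On $(3\pi/4,5\pi/4)$ both $t(\theta)>0$ and $t'(\theta)>0$; Lemma~\ref{lemthetaM} places $\theta_M>3\pi/4$ with $\sigma_\theta^2$ maximized at $\theta_M+\pi/2>5\pi/4$, so $\sigma_\theta'\ge 0$ throughout $[\theta_M,5\pi/4]$ and is strictly positive on $(\theta_M,5\pi/4]$; and $z\le 0$ in the outside-circle region by Lemma~\ref{lemL2GE4}. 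Hence both contributions are non-positive and at least one is strictly negative, except exactly at $(r,\theta)=(2\sqrt{2}x_2^*,5\pi/4)=\ba^*$.

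Finally, at $\theta=5\pi/4$ Player I's offer lies on the line $y=x$, so the set $C_1$ reduces to the half-plane $\xi+\eta<x_1+x_2^*$ and the game collapses to a one-dimensional FOA problem with arbiter $\xi+\eta\sim N(0,A)$, where $A=\sigma_x^2+2\rho\sigma_x\sigma_y+\sigma_y^2$. Since Player II's effective one-dimensional offer $2x_2^*=\sqrt{2\pi A}/2$ is exactly the Brams-Merrill optimum, that theorem gives $K(r,5\pi/4)\ge 0$ with equality only at $r=2\sqrt{2}x_2^*$ (i.e.\ at $\ba^*$). Combining this with the strict monotonicity in $\theta$ yields $K(\ba,\bb^*)>0$ throughout the stated region except at $\ba^*$. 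The main obstacle I expect is the sign verification for $\partial K/\partial\theta$, which genuinely needs Lemma~\ref{lemthetaM}'s precise placement of $\theta_M$ so that $\sigma_\theta$ is monotone on the entire interval; a secondary subtlety is tracking the lone zero of $\partial K/\partial\theta$ on $[\theta_M,5\pi/4]$, which turns out to coincide exactly with the excluded point $\ba^*$.
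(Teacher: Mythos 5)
Your proposal is correct and follows essentially the same route as the paper: both arguments hinge on monotonicity of the payoff in $\theta$ over $[\theta_M,\frac{5\pi}{4}]$ --- driven by $t(\theta)$ increasing and $\sigma_\theta^2$ increasing there, which is exactly what Lemma~\ref{lemthetaM} supplies --- followed by the reduction to the one-dimensional Brams--Merrill optimum at $\theta=\frac{5\pi}{4}$; the paper merely packages the monotonicity as two separate claims (that $\Phi(z)$ increases and that $f=2x_2^*/(r\,t(\theta))$ decreases) rather than differentiating $K$ in $\theta$ directly. A minor point in your favor: by first invoking Lemma~\ref{lemL2GE3} to dispose of points with $r\,t(\theta)<4x_2^*$, you explicitly cover the part of the outside-of-circle region with $r\le 2\sqrt{2}x_2^*$, which the paper's proof (fixing $r^*>2\sqrt{2}x_2^*$ at the outset) passes over.
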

\begin{proof}
Observe that $K>0$ is equivalent to  
$$2x_2^*-rt(\theta)\Phi(z)>0\quad\Leftrightarrow\quad\Phi(z) < f(r,\theta)= \frac{2x_2^*}{rt(\theta)}. $$
Let us fix $r^*> 2\sqrt{2}x_2^*$. The proof proceeds via three claims:\\
{\bf Claim 1:} $f({r^*},\theta)$ is a decreasing function for $\theta \in [\theta_M, \frac{5\pi}{4}]$.\\
{\bf Proof of Claim 1:} 
This claim follows immediately after noting that $t(\theta)$ is an increasing function on $(\frac{3\pi}{4},\frac{5\pi}{4})$, and, from Lemma~\ref{lemthetaM}, $\frac{3\pi}{4}<\theta_M$. \\
%So as if we fix $r^*>2\sqrt{2}x_2^*$ and begin with $\theta=\frac{5\pi}{4}$, $f_{r^*}(\theta)$ increases as $\theta \downarrow \frac{3\pi}{4}$.\\
{\bf Claim 2:} $\Phi(z({r^*},\theta))$ is an increasing function for $\theta \in [\theta_M, \frac{5\pi}{4}]$.\\
{\bf Proof of Claim 2:} 
Recall that $ z = \frac{2x_2^*t(\theta)-r}{2\sqrt{\sigma_\theta^2}}.$
%For a fixed $r$, as $\theta\downarrow\frac{3\pi}{4}$ the numerator is decreasing. 
For $\theta \in [\frac{3\pi}{4},\frac{5\pi}{4}]$, $r_0=2x_2^*t(\theta)$ is increasing and attains its maximum value of $2\sqrt{2}x_2^*$ when $\theta=\frac{5\pi}{4}$. 
Since $r^*> 2\sqrt{2}x_2^* \ge r_0$, ${z(r^*,\theta)<0}$. Because $\sigma_\theta^2$ attains its minimum at $\theta_M$ and is maximized at $\theta^M>\frac{5\pi}{4}$, 
$\sigma_\theta^2$ is increasing on $[\theta_M, \frac{5\pi}{4}]$.

Consider $$|z|=\frac{r^*-2x_2^*t(\theta)}{2\sqrt{\sigma_\theta^2}}.$$ For $\theta\in[\theta_M,\frac{5\pi}{4}]$, $t(\theta)$ increases so the numerator is decreasing. Meanwhile the denominator is increasing. 
%So $z(r^*,\theta)$ is increasing. 
Thus $\Phi(z(r^*,\theta))$ is an increasing function in $[\theta_M,\frac{5\pi}{4}] $.\\
{\bf Claim 3:} $\Phi(z(r^*,\frac{5\pi}{4})) < f(r^*,\frac{5\pi}{4})$.\\
{\bf Proof of Claim 3:} If we fix $\theta = \frac{5\pi}{4}$, then the players are in the one-dimensional FOA game, and we already know that $\ba^*=-\bb^*$ (i.e. $r^*=2\sqrt{2}x_2^*$) is the globally optimal strategy for Player I to play against $\bb^*$. Since we have fixed $r^*>2\sqrt{2}x_2^*$, Player~I is not playing optimally, so $K>0$ which is equivalent to the claim. \\
From these three claims it follows that $K>0$ for $r>2\sqrt{2}x_2^*$ and $\theta\in[\theta_M,\frac{5\pi}{4}]$.
\end{proof}

\begin{lemma}
Suppose $\rho>0$, and $\sigma_x<\sigma_y$. Then  $$\theta_M<\arctan\frac{\alpha'}{\beta'},$$ where $\alpha'=\sigma_x^2-\rho\sigma_x\sigma_y$ and $\beta'=\sigma_y^2-\rho\sigma_x\sigma_y$.\label{L2intervalsubset}
\end{lemma}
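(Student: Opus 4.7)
The plan is a case split on the sign of $\alpha'$, interpreting $\arctan(\alpha'/\beta')$ consistently with Lemma~\ref{lemL2GE1} as the angle of the direction $(-\beta',-\alpha')$ from $\bb^*$, i.e., $\pi$ plus the principal arctangent. Since $\sigma_x<\sigma_y$ and $0<\rho<1$, an AM--GM check ($2\rho\sigma_x\sigma_y<\sigma_x^2+\sigma_y^2$) yields $|\alpha'/\beta'|<1$, placing this angle in $(3\pi/4,5\pi/4)$; meanwhile Lemma~\ref{lemthetaM} places $\theta_M$ in $(3\pi/4,\pi)$.

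The easy case is $\alpha'\ge 0$ (equivalently $\rho\sigma_y\le\sigma_x$): here $(-\beta',-\alpha')$ lies in the closed third quadrant, so $\arctan(\alpha'/\beta')\ge\pi>\theta_M$ and the claim is immediate. When $\alpha'<0$, both angles lie in $(3\pi/4,\pi)$, where $\tan$ is strictly increasing, so the claim reduces to the scalar inequality $\tan\theta_M<\alpha'/\beta'$. Writing $u=\sigma_x^2$, $v=\sigma_y^2$, $w=\rho\sigma_x\sigma_y$, and $D=\sqrt{(v-u)^2+4w^2}$, Lemma~\ref{lemthetaM} gives $\tan\theta_M=((v-u)-D)/(2w)$, and direct computation gives $\alpha'/\beta'=(u-w)/(v-w)$. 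Cross-multiplying by the positive quantity $2w(v-w)$ and isolating $D$, the goal becomes
\[
D(v-w)>(v-u)(v-w)-2w(u-w)=v^2-vw-uv-uw+2w^2.
\]
In the sub-case $u<w$, the right-hand side is positive: viewed as a quadratic in $v$ with leading coefficient $1$, its discriminant factors as $(u-w)(u+7w)$, which is negative since $u<w$ and $u+7w>0$.

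The main obstacle is the algebraic clean-up after squaring; squaring the original form $m-\alpha'/\beta'<\sqrt{m^2+1}$ merely returns an equivalent statement, so one must first cross-multiply and only then square. With both sides of the isolated-$D$ inequality positive, square them; the $(v-u)^2(v-w)^2$ terms cancel between the two sides, and the difference-of-squares identity $(v-w)^2-(w-u)^2=(v-u)(u+v-2w)$ collapses the surviving terms to the clean condition $4w(v-u)(uv-w^2)>0$. Since $v-u>0$ and $w>0$, this is equivalent to $uv>w^2$, i.e., $\rho^2<1$, which is the standard non-degeneracy condition for the bivariate normal and completes the proof.
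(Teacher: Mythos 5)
Your proof is correct and takes essentially the same route as the paper: both reduce the angle comparison to the slope inequality $\tan\theta_M<\frac{\alpha'}{\beta'}$ and, after clearing denominators and squaring, arrive at the same decisive condition $(1-\rho^2)\sigma_x^2\sigma_y^2>0$. The differences are only in presentation --- you argue directly with a case split on the sign of $\alpha'$ and verify positivity before squaring, where the paper argues by contradiction and is slightly more cavalier about the arctangent branch and the squaring step.
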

\begin{proof}
We just need to show that the slope $\frac{\alpha'}{\beta'}$ is greater than $\tan\theta_M$. In other words, we must show that
$$\frac{\sigma_y^2-\sigma_x^2}{2\rho\sigma_x\sigma_y} - \sqrt{\left(\frac{\sigma_y^2-\sigma_x^2}{2\rho\sigma_x\sigma_y}\right)^2+1} < \frac{\sigma_x^2-\rho\sigma_x\sigma_y}{\sigma_y^2-\rho\sigma_x\sigma_y}.$$
%We first note that the LHS is negative. Since $\sigma_x<\sigma_y$ the denominator of the RHS is positive. 
Suppose for some $\sigma_x<\sigma_y$ and $0<\rho<1$ we have a contradiction, that is
$$ \frac{\sigma_y^2-\sigma_x^2}{2\rho\sigma_x\sigma_y} - \sqrt{\left(\frac{\sigma_y^2-\sigma_x^2}{2\rho\sigma_x\sigma_y}\right)^2+1} \ge \frac{\sigma_x^2-\rho\sigma_x\sigma_y}{\sigma_y^2-\rho\sigma_x\sigma_y}. $$
This inequality is equivalent to the following inequalities:
\begin{alignat*}{2}
\frac{\beta'-\alpha'}{2\rho\sigma_x\sigma_y} -\sqrt{\left(\frac{\beta'-\alpha'}{2\rho\sigma_x\sigma_y}\right)^2+1} &\ge \frac{\alpha'}{\beta'}\\ 
\frac{\beta'-\alpha'}{2\rho\sigma_x\sigma_y} - \frac{\alpha'}{\beta'}&\ge  \sqrt{\left(\frac{\beta'-\alpha'}{2\rho\sigma_x\sigma_y}\right)^2+1}\\
\left(\frac{\beta'-\alpha'}{2\rho\sigma_x\sigma_y}\right)^2 -\frac{(\beta'-\alpha')\alpha'}{\rho\sigma_x\sigma_y\beta'} + \frac{\alpha'^2}{\beta'^2} &\ge\left(\frac{\beta'-\alpha'}{2\rho\sigma_x\sigma_y}\right)^2+1\\
-\frac{(\beta'-\alpha')\alpha'}{\rho\sigma_x\sigma_y\beta'} + \frac{\alpha'^2}{\beta'^2} -1 &\ge 0\\
-(\beta'-\alpha')\alpha'\beta' + \alpha'^2\rho\sigma_x\sigma_y-\beta'^2\rho\sigma_x\sigma_y&\ge 0\\
(\alpha'-\beta')\alpha'\beta' +(\alpha'-\beta')(\alpha'+\beta')\rho\sigma_x\sigma_y&\ge 0\\
\alpha'\beta' + (\alpha'+\beta')\rho\sigma_x\sigma_y&\le  0\\
(\sigma_x^2\sigma_y^2-\rho\sigma_x^3\sigma_y-\rho\sigma_x\sigma_y^3+\rho^2\sigma_x^2\sigma_y^2) + (\sigma_x^2-2\rho\sigma_x\sigma_y+\sigma_y^2)\rho\sigma_x\sigma_y&\le 0\\
\sigma_x^2\sigma_y^2 - \rho^2\sigma_x^2\sigma_y^2&\le 0\\
(1-\rho^2)\sigma_x^2\sigma_y^2&\le 0.
\end{alignat*}
This is of course impossible as $-1<\rho<1$. Note that we used the facts that $\beta'=\sigma_y^2-\rho\sigma_x\sigma_y>0$ and $\alpha'-\beta'=\sigma_x^2-\sigma_y^2<0$.
\end{proof}

\begin{corollary}
If $\rho>0$, $\sigma_x<\sigma_y$, and $\bb^*=(x_2^*,x_2^*)$ then for all  $\ba=(x_1,y_1)$ with $x_1^2+y_1^2>2x_2^{*2}$, $K(\ba,\bb^*)> 0$.\label{corL2OutCircle}
\end{corollary}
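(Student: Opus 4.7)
The plan is to argue by contradiction using the chain of lemmas just proved. Assume some $\ba=(x_1,y_1)$ with $x_1^2+y_1^2 > 2x_2^{*2}$ satisfies $K(\ba,\bb^*) \le 0$. Since $-\bb^*$ lies on the circle $x^2+y^2 = 2x_2^{*2}$ and not strictly outside it, $\ba \neq -\bb^*$, so Lemma~\ref{lemL2GE3} applies; the alternative $x_1^2+y_1^2 < 2x_2^{*2}$ is excluded by hypothesis, so we are forced into $\tilde{w} := x_1+y_1 \le -2x_2^*$. I would then split according to whether $\tilde{w} < -2x_2^*$ (the \emph{strict case}) or $\tilde{w} = -2x_2^*$ (the \emph{tangent case}).

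In the strict case, the entire line $\ell = \{(x,y) : x+y = \tilde w\}$ lies strictly outside the disk $x^2+y^2 \le 2x_2^{*2}$, because on $\ell$ the quantity $x^2+y^2$ attains its minimum $\tilde w^2/2 > 2x_2^{*2}$ at $x=y=\tilde w/2$. Applying Lemma~\ref{lemL2GE1} to $\ell$ locates the minimizer $\ba^\circ$ of $K(\cdot,\bb^*)$ at an angle $\theta^\circ \in [\arctan(\alpha'/\beta'),\, 5\pi/4]$; Lemma~\ref{L2intervalsubset} places this interval inside $[\theta_M,\, 5\pi/4]$. The line equation $-r^\circ t(\theta^\circ) = \tilde w - 2x_2^*$ yields $r^\circ = (2x_2^* - \tilde w)/t(\theta^\circ)$, and since $\tilde w < -2x_2^*$ makes $2x_2^* - \tilde w > 4x_2^*$ while $t(\theta^\circ) \le \sqrt{2}$, we obtain $r^\circ > 2\sqrt{2}\,x_2^*$. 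Lemma~\ref{lemL2GE2} then gives $K(\ba^\circ,\bb^*) > 0$, hence $K(\ba,\bb^*) \ge K(\ba^\circ,\bb^*) > 0$, contradicting the standing assumption.

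The tangent case $\tilde w = -2x_2^*$ needs a direct argument, because on this line the minimizer of $K$ is $-\bb^*$ itself (with $K = 0$), so Lemma~\ref{lemL2GE2} is unavailable at the minimizer. However, by~(\ref{eqzrtheta}) the condition $x_1^2+y_1^2 > 2x_2^{*2}$ is equivalent to $r > 2x_2^* t(\theta)$, which in turn is equivalent to $z(\ba,\bb^*) < 0$, giving $\Phi(z) < 1/2$. Plugging $\tilde w = -2x_2^*$ into the payoff expression then yields
\begin{equation*}
K(\ba,\bb^*) \;=\; 2x_2^* + (\tilde w - 2x_2^*)\,\Phi(z) \;=\; 2x_2^*\bigl(1 - 2\Phi(z)\bigr) \;>\; 0,
\end{equation*}
again contradicting $K(\ba,\bb^*) \le 0$.

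The main obstacle is recognising that the tangent line $\tilde w = -2x_2^*$ must be peeled off and handled outside the Lemma~\ref{lemL2GE2} framework: on this line the infimum of $K$ along $\ell$ is attained precisely at the boundary point $-\bb^*$ rather than strictly inside $\{x_1^2+y_1^2 > 2x_2^{*2}\}$, so the chain through Lemmas~\ref{lemL2GE1}, \ref{L2intervalsubset}, and~\ref{lemL2GE2} breaks down. One then has to exploit the fortunate numerical coincidence that $2x_2^* + (\tilde w - 2x_2^*)/2 = 0$ exactly when $\tilde w = -2x_2^*$, which combined with the strict bound $\Phi(z) < 1/2$ delivers $K > 0$ without further work.
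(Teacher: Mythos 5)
Your proof is correct and rests on the same chain of lemmas the paper uses (Lemma~\ref{lemL2GE1} $\to$ Lemma~\ref{L2intervalsubset} $\to$ Lemma~\ref{lemL2GE2}), but it is noticeably more complete than the paper's own one-line argument. The paper simply cites those three lemmas, which only covers points whose per-line minimizer is itself outside the circle; it never explains why one may restrict attention to lines $x_1+y_1=\tilde w$ with $\tilde w\le -2x_2^*$, nor what happens on the tangent line $\tilde w=-2x_2^*$, where the per-line minimizer is $-\bb^*$ itself and Lemma~\ref{lemL2GE2} gives nothing. You supply both missing pieces: the reduction via Lemma~\ref{lemL2GE3} (which the paper relegates to a figure caption and does not actually invoke in the corollary's proof), and the direct computation $K=2x_2^*(1-2\Phi(z))>0$ on the tangent line using $z<0$ from Lemma~\ref{lemL2GE4}. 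Your verification that the minimizer on a line with $\tilde w<-2x_2^*$ satisfies $r^\circ=(2x_2^*-\tilde w)/t(\theta^\circ)>2\sqrt{2}\,x_2^*$ is also a genuine improvement, since the proof of Lemma~\ref{lemL2GE2} in fact only treats radii $r>2\sqrt{2}\,x_2^*$ even though its statement is phrased in terms of $x_1^2+y_1^2>2x_2^{*2}$. In short: same route as the paper, but yours closes edge cases the paper leaves open.
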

\begin{proof}
By Lemma~\ref{lemL2GE1}, $K(\ba,\bb^*)$ is minimized for some $\theta\in[\arctan\frac{\sigma_x^2-\rho\sigma_x\sigma_y}{\sigma_y^2-\rho\sigma_x\sigma_y},\frac{5\pi}{4}]$. By Lemma~\ref{L2intervalsubset}, $[\arctan\frac{\sigma_x^2-\rho\sigma_x\sigma_y}{\sigma_y^2-\rho\sigma_x\sigma_y},\frac{5\pi}{4}]\subset[\theta_M,\frac{5\pi}{4}]$, and by Lemma~\ref{lemL2GE2}, $K(\ba,\bb^*)>0 $ for any $\theta\in [\theta_M,\frac{5\pi}{4}]$ with $x_1^2+y_1^2>2x_2^{*2}$. 
\end{proof}

Now that we have shown that against $(x_2^*,x_2^*)$ all pure strategies for Player I outside the circle $x^2+y^2=2x_2^{*2}$ will give a positive expected payoff, we consider strategies within the circle.

\begin{lemma}
Suppose $\rho>0$, $\sigma_x<\sigma_y$ and $\bb^*=(x_2^*,x_2^*)$.
For all pure strategies $\ba=(x_1,x_2)$% =(x_2^*+r \cos \theta, x_2^*+r\sin\theta)$ 
such that $x_1^2+y_1^2<2x_2^{*2}$,  $K(\ba,\bb^*)> 0$. \label{lemL2InCircle}
\end{lemma}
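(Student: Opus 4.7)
The plan follows the scheme of Corollary \ref{corL2OutCircle}, but adapted to the interior of the disc. First I would use Lemma \ref{lemL2GE3} to reduce: for $\ba$ inside the open disc, the second disjunct $x_1+y_1\le -2x_2^*$ is ruled out by Cauchy-Schwarz ($(x_1+y_1)^2\le 2(x_1^2+y_1^2)<4x_2^{*2}$), so any $\ba$ with $K\le 0$ must satisfy $x_1+y_1\in(-2x_2^*,0)$. Writing $\ba=(x_2^*+r\cos\theta,\,x_2^*+r\sin\theta)$, the fixed-$\tilde w$ argument of Lemma \ref{lemL2GE1} applies verbatim on every horizontal chord $x_1+y_1=\tilde w\in(-2x_2^*,0)$ and places the chordwise $K$-minimum at $\theta\in[\arctan(\alpha'/\beta'),5\pi/4]\subseteq[\theta_M,5\pi/4]$ (Lemma \ref{L2intervalsubset}). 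It therefore suffices to prove $K(r,\theta)>0$ for every $\theta\in[\theta_M,5\pi/4]$ and $r\in(0,2x_2^*t(\theta))$.

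Fix $\theta$ in this cone. From $K(r,\theta)=2x_2^*-rt(\theta)\Phi\bigl(z(r,\theta)\bigr)$ with $z=(2x_2^*t(\theta)-r)/(2\sigma_\theta)$, the two endpoint values are $K(0,\theta)=2x_2^*>0$ and $K(2x_2^*t(\theta),\theta)=x_2^*(2-t^2(\theta))\ge 0$, strict unless $\theta=5\pi/4$. At $\theta=5\pi/4$ the ray degenerates to the diagonal $y=x$ and the problem collapses to the one-dimensional Brams-Merrill subgame, giving $K>0$ for $r\in(0,2\sqrt 2\,x_2^*)$. For $\theta\in[\theta_M,5\pi/4)$, an interior zero of $K$ would have to occur at a critical point $r_c$, and setting $\partial_r K=0$ yields $r_c=2\sigma_\theta M(z_c)$ where $M(z):=\Phi(z)/\phi(z)$; combining with the definition of $z$ produces the implicit relation $z_c+M(z_c)=x_2^*t(\theta)/\sigma_\theta$, which, since $z\mapsto z+M(z)$ is strictly increasing on $[0,\infty)$, has at most one nonnegative solution.

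Substituting $r_c=2\sigma_\theta M(z_c)$ back,
\[
K(r_c,\theta)=2x_2^*\bigl(1-t^2(\theta)\Phi(z_c)\bigr)+2\sigma_\theta t(\theta)\,z_c\Phi(z_c),
\]
and after using the identity $2x_2^*\sqrt{2/\pi}=\sqrt{\alpha+\beta}$ together with the implicit equation for $z_c$, the requirement $K(r_c,\theta)>0$ reduces to the single inequality
\[
z_c\phi(z_c)>\Phi(z_c)\bigl(t^2(\theta)\Phi(z_c)-1\bigr),
\]
which is trivially satisfied whenever $t^2(\theta)\Phi(z_c)\le 1$. The main obstacle is the complementary case $t^2(\theta)\Phi(z_c)>1$: the degenerate endpoint $\theta=5\pi/4$ saturates this with $(z_c,t^2)=(0,2)$ and both sides vanishing, and a second-order Taylor expansion in $\epsilon=5\pi/4-\theta$ shows $K(r_c,\theta)\sim 2x_2^*\epsilon^2$, giving strict positivity in a neighborhood. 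Pushing this to all of $[\theta_M,5\pi/4)$ requires exploiting the uniform lower bound $\sigma_\theta\ge\sigma_{\theta_M}$ on the cone (Lemma \ref{lemthetaM}), which together with the implicit relation bounds $z_c$ and closes the inequality via direct monotone estimates on $z\phi(z)$ and $\Phi(z)$.
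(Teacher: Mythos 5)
Your reduction is sound and your algebra checks out as far as it goes: restricting to the cone $[\theta_M,\tfrac{5\pi}{4}]$ inside the disc via Lemmas~\ref{lemL2GE3}, \ref{lemL2GE1} and \ref{L2intervalsubset} is legitimate (the paper itself does not even need the cone -- it works on the whole semicircle $\theta\in(\pi,\tfrac{3\pi}{2})$, where $t(\theta)>1$), the critical-point equation $z_c+M(z_c)=x_2^*t(\theta)/\sigma_\theta$ with $M=\Phi/\phi$ is correct, and so is the reduction of $K(r_c,\theta)>0$ to $z_c\phi(z_c)>\Phi(z_c)\bigl(t^2(\theta)\Phi(z_c)-1\bigr)$. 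The problem is that this last inequality, in the case $t^2(\theta)\Phi(z_c)>1$, is exactly the hard part of the lemma, and you have not proved it -- you have only asserted that it ``closes via direct monotone estimates.'' It does not close by any crude estimate: using the universal bound $t^2(\theta)\le 2$ one would need $z\phi(z)>\Phi(z)(2\Phi(z)-1)$ for $z>0$, and this is \emph{false} for small $z$ (both sides and their first derivatives vanish at $z=0$, but the second derivatives are $0$ and $4\phi(0)^2=2/\pi$ respectively, so the right side dominates near $0$). Since $t^2(\theta)\to 2$ and $z_c\to 0$ simultaneously as $\theta\to\tfrac{5\pi}{4}$ (this is the equilibrium point itself, where $K=0$), the inequality you need is a competition between two second-order degeneracies, and any proof must couple $2-t^2(\theta)$ to $z_c$ quantitatively through the implicit equation and through $\sigma_\theta$. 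That coupling is precisely what is missing; without it the proposal establishes positivity only near the endpoints $r\in\{0,r_0\}$, near $\theta=\tfrac{5\pi}{4}$, and in the easy regime $t^2\Phi(z_c)\le 1$.

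For contrast, the paper avoids the critical-point analysis entirely. Writing the claim as $\Phi(z(r,\theta))<f(r,\theta)=2x_2^*/(rt(\theta))$, it observes that $\Phi(z(r,\theta))$ is concave in $r$ on $(0,r_0)$ (where $z>0$) and hence lies below its tangent line at $r_0$; it then shows the hyperbola $f$ never meets that tangent line, because the resulting quadratic in $r$ having a root below $r_0$ forces $t(\theta)<1/t(\theta)$, contradicting $t(\theta)>1$ on the semicircle. That argument is global in $r$, needs no Mills-ratio monotonicity, and handles the degenerate tangency at the equilibrium automatically. If you want to salvage your route, you would need to prove the inequality $z_c+M(z_c)\bigl(1-t^2(\theta)\Phi(z_c)\bigr)>0$ uniformly on the cone; replacing your final paragraph with the paper's tangent-line comparison is the shorter path.
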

The proof relies on the concavity of the CDF of the normal distribution within the circle in question.
\begin{proof}
%Recall from Lemma~\ref{lemL2GE1} that for any $r>0$, $K(\bx_1,\bx_2^*)$ is minimized at some $\theta<\frac{5\pi}{4}$. Equivalently, $x_1<y_1$. So we need only show that $K(\bx_1, \bx_2^*)>0$ for all $\bx$ in the region described by
From Lemma~\ref{lemL2GE3}, we need only show that $K(\ba,\bb^*)>0$ for all $\ba$ in the semi-circle described by
$$\begin{cases}
x+y<0,\\ 
x^2+y^2<{2}x_2^{*2}.
\end{cases}$$ 
In terms of $\theta$, we are restricting our attention to $\theta\in(\pi,\frac{3\pi}{2})$. For the angles $\theta$ in question, $t(\theta)>1$. Recall from (\ref{eqKabrtheta}) that $K(\ba, \bb^*)>0$ is equivalent to 
$$\Phi(z) < f(r,\theta)=\frac{2x_2^*}{rt(\theta)}. $$

First we fix $\tilde{\theta}\in (\pi,\frac{3\pi}{2})$. Let $r_0=2x_2^*t(\tilde{\theta})$. Note by definition that $z(r_0,\tilde{\theta})=0$. Since $z=\frac{r_0-r}{2\sqrt{\sigma_\theta^2}}$,  it is straightforward to show that
$$\left.\frac{d}{dr}\Phi(z)\right|_{z=0}
=\left.\phi(z)\frac{dz}{dr}\right|_{z=0}
=\frac{1}{\sqrt{2\pi}}\frac{-1}{2\sqrt{\sigma_{\tilde{\theta}}^2}}
%=\phi(0)\frac{-1}{2\sqrt{\cos^2\theta\sigma_x^2+2\cos\theta\sin\theta\rho\sigma_x\sigma_y+\sin^2\theta\sigma_y^2}} 
%= \frac{-1}{\sqrt{8\pi(\cos^2\theta\sigma_x^2+2\cos\theta\sin\theta\rho\sigma_x\sigma_y+\sin^2\theta\sigma_y^2)}}
=\frac{-1}{2\sqrt{2\pi\sigma_{\tilde{\theta}}^2}}.$$
Define $y$ as the line tangent to $\Phi$ at $(r_0,\frac{1}{2})$, specifically,
$$y(r,{\tilde{\theta}})=-\frac{r-r_0}{2\sqrt{2\pi\sigma_{\tilde{\theta}}^2}}+\frac{1}{2}.$$

Note $\Phi$ is a concave function for $r<r_0$. Therefore, % For $r<r_0$, 
$\Phi(z(r,{\tilde{\theta}}))\le y(r,{\tilde{\theta}})$.
To demonstrate that $f>\Phi$ for all $r<r_0$, it suffices to show that $f>y$ for all $r$.
Since $f$ and $y$ are both continuous functions and $\lim_{r\to 0^+}f(r,{\tilde{\theta}}) =\infty \gg y(0,{\tilde{\theta}})$, it suffices to show that  $f\not=y$ for any $r$. 
If the two curves do intersect, then there is at least one solution to the equation %$f(r,{\tilde{\theta}})=y(r,{\tilde{\theta}})$:
$$\frac{\sqrt{2\pi(\alpha+\beta)}}{2rt({\tilde{\theta}})} =-\frac{1}{2\sqrt{2\pi\sigma_{\tilde{\theta}}^2}}r +\frac{t({\tilde{\theta}})\sqrt{\alpha+\beta}}{4\sqrt{\sigma_{\tilde{\theta}}^2}} + \frac{1}{2},$$
or equivalently 
\begin{alignat*}{2}
0 &= \frac{1}{2\sqrt{2\pi\sigma_{\tilde{\theta}}^2}}r^2 -\left(\frac{t({\tilde{\theta}})\sqrt{\alpha+\beta}}{4\sqrt{\sigma_{\tilde{\theta}}^2}} + \frac{1}{2}\right)r+\frac{\sqrt{2\pi(\alpha+\beta)}}{2t({\tilde{\theta}})}\\
&=r^2 -\left(\frac{t({\tilde{\theta}})\sqrt{2\pi(\alpha+\beta)}}{2} + \sqrt{2\pi\sigma_{\tilde{\theta}}^2} \right)r+\frac{2\pi\sqrt{(\alpha+\beta)\sigma_{\tilde{\theta}}^2}}{t({\tilde{\theta}})}.
\end{alignat*}
We have a quadratic in $r$. Let $$\hat{r}=\frac{t({\tilde{\theta}})\sqrt{2\pi(\alpha+\beta)}}{4} + \frac{\sqrt{2\pi\sigma_{\tilde{\theta}}^2}}{2}$$
and
$$\Delta = \left(\frac{t({\tilde{\theta}})\sqrt{2\pi(\alpha+\beta)}}{2} + \sqrt{2\pi\sigma_{\tilde{\theta}}^2} \right)^2-\frac{8\pi\sqrt{(\alpha+\beta)\sigma_{\tilde{\theta}}^2}}{t({\tilde{\theta}})}$$

If $\Delta<0$ then we are done. Let us assume that $\Delta\ge0$. If $f(r^*,\tilde{\theta})=y(r^*,\tilde{\theta})$, it must that $r^* < r_0$; for $r \ge r_0$, $f(r,\tilde{\theta}) >\Phi(z(r^*,\tilde{\theta}))>y(r^*,\tilde{\theta})$. This gives us a condition, namely $\hat{r}+\frac{\sqrt{\Delta}}{2}< r_0$. % and $\bar{r}\le r^-=\hat{r}-\frac{\sqrt{\Delta}}{2}$.
\begin{alignat*}{2}
\hat{r}+\frac{\sqrt{\Delta}}{2} &< r_0\\
\frac{1}{\sqrt{2\pi}}\left(\hat{r}+\frac{\sqrt{\Delta}}{2}\right) &< \frac{r_0}{\sqrt{2\pi}}\\
\frac{t(\tilde{\theta})\sqrt{\alpha+\beta}}{4} + \frac{\sqrt{\sigma_{\tilde{\theta}}^2}}{2}+ \frac{\sqrt{\left(\frac{t(\tilde{\theta})\sqrt{\alpha+\beta}}{2} + \sqrt{\sigma_{\tilde{\theta}}^2} \right)^2-\frac{2\sqrt{(\alpha+\beta)\sigma_{\tilde{\theta}}^2}}{t(\tilde{\theta})} }}{2}&< \frac{t(\tilde{\theta})\sqrt{\alpha+\beta}}{2}\\
{\sqrt{\sigma_{\tilde{\theta}}^2}}+ \sqrt{\left(\frac{t(\tilde{\theta})\sqrt{\alpha+\beta}}{2} + \sqrt{\sigma_{\tilde{\theta}}^2} \right)^2-\frac{2\sqrt{(\alpha+\beta)\sigma_{\tilde{\theta}}^2}}{t(\tilde{\theta})} }&< \frac{t(\tilde{\theta})\sqrt{\alpha+\beta}}{2}\\
\sqrt{\left(\frac{t(\tilde{\theta})\sqrt{\alpha+\beta}}{2} + \sqrt{\sigma_{\tilde{\theta}}^2} \right)^2-\frac{2\sqrt{(\alpha+\beta)\sigma_{\tilde{\theta}}^2}}{t(\tilde{\theta})} }&< \frac{t(\theta)\sqrt{\alpha+\beta}}{2}-\sqrt{\sigma_{\tilde{\theta}}^2}\\
\left(\frac{t({\tilde{\theta}})\sqrt{\alpha+\beta}}{2} + \sqrt{\sigma_{\tilde{\theta}}^2} \right)^2-\frac{2\sqrt{(\alpha+\beta)\sigma_{\tilde{\theta}}^2}}{t({\tilde{\theta}})} &< \left(\frac{t({\tilde{\theta}})\sqrt{\alpha+\beta}}{2} - \sqrt{\sigma_{\tilde{\theta}}^2} \right)^2\\
\left(\frac{t({\tilde{\theta}})\sqrt{\alpha+\beta}}{2} + \sqrt{\sigma_{\tilde{\theta}}^2} \right)^2-\left(\frac{t({\tilde{\theta}})\sqrt{\alpha+\beta}}{2} - \sqrt{\sigma_{\tilde{\theta}}^2} \right)^2 &< \frac{2\sqrt{(\alpha+\beta)\sigma_{\tilde{\theta}}^2}}{t({\tilde{\theta}})}\\
2t({\tilde{\theta}})\sqrt{(\alpha+\beta)\sigma_{\tilde{\theta}}^2} &< \frac{2\sqrt{(\alpha+\beta)\sigma_\theta^2}}{t(\theta)}\\
t({\tilde{\theta}}) &< \frac{1}{t({\tilde{\theta}})}
\end{alignat*}
in other words, $t({\tilde{\theta}})<1$, which is a contradiction.
\end{proof}
The following is the main result.
\begin{theorem}
For $\rho>0$, if the arbiter uses $L_2$ distance as a decision criterion, then $\ba^*=(-x_2^*,-x_2^*), \bb^*=(x_2^*,x_2^*)$ is a pure global equilibrium pair.
\end{theorem}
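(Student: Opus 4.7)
The plan is to reduce the theorem to the three regional lemmas already established, plus the earlier symmetry results. First, handle the degenerate case $\sigma_x=\sigma_y$ by quoting Theorem~\ref{thmL2GE5} directly. For the remaining case $\sigma_x\neq\sigma_y$, I would assume without loss of generality that $\sigma_x<\sigma_y$ (the swap $x\leftrightarrow y$ preserves the form of the model and of $\ba^*,\bb^*$). The goal is then the standard saddle-point verification: show $K(\ba,\bb^*)\ge K(\ba^*,\bb^*)=0$ for every pure strategy $\ba$ of Player~I, and $K(\ba^*,\bb)\le 0$ for every pure strategy $\bb$ of Player~II.

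For the first inequality, I would partition Player I's strategy space (minus $\ba^*$) into three pieces according to the circle $x_1^2+y_1^2=2x_2^{*2}$ described in Lemma~\ref{lemL2GE4}: points strictly outside, points strictly inside, and points on the circle. Corollary~\ref{corL2OutCircle} gives $K(\ba,\bb^*)>0$ on the exterior; Lemma~\ref{lemL2InCircle} gives $K(\ba,\bb^*)>0$ on the interior; and Lemma~\ref{lemL2OnCircle} gives $K(\ba,\bb^*)\ge 0$ on the circle itself, with equality only at $\ba=-\bb^*=\ba^*$. Combining these three statements covers every $\ba\neq\ba^*$ and yields $K(\ba,\bb^*)>K(\ba^*,\bb^*)=0$, which simultaneously proves global optimality of $\bb^*$ against Player~I and uniqueness of $\ba^*$ as Player I's best response.

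For the second inequality I would invoke the symmetry of the game. Since $\ba^*=-\bb^*$, Lemma~\ref{lemAnonymous} and Lemma~\ref{lemNegativeK} together give $K(\ba^*,\bb)=K(-\bb^*,\bb)=-K(\bb^*,-\bb)=-K(-\bb,\bb^*)$. By the argument of the previous paragraph applied to the deviation $-\bb$ of Player~I, $K(-\bb,\bb^*)\ge 0$, so $K(\ba^*,\bb)\le 0$ for every pure $\bb$, with equality only when $\bb=\bb^*$. This completes the saddle-point verification, and by Lemma~\ref{valuezero} the value $0$ is correct.

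The main conceptual obstacle is not in this theorem itself but has already been absorbed into the preceding lemmas: the interior-of-circle case (Lemma~\ref{lemL2InCircle}), where the concavity of $\Phi$ on the relevant half-disk had to be exploited via a tangent-line comparison, and the chain Lemma~\ref{lemL2GE1}$\to$Lemma~\ref{lemL2GE2}$\to$Lemma~\ref{L2intervalsubset} that pins the minimizing direction to an angular interval contained in $[\theta_M,5\pi/4]$ so that the one-dimensional Brams--Merrill result can be applied along the diagonal. At the level of this final theorem, the only thing to be careful about is the WLOG reduction $\sigma_x<\sigma_y$ and the bookkeeping of strict versus non-strict inequalities, which also gives uniqueness of the equilibrium pair.
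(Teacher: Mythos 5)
Your proposal is correct and follows essentially the same route as the paper, which simply assembles Corollary~\ref{corL2OutCircle}, Lemma~\ref{lemL2InCircle}, and Lemma~\ref{lemL2OnCircle} (after the WLOG reduction and the $\sigma_x=\sigma_y$ case via Theorem~\ref{thmL2GE5}) to get $K(\ba,\bb^*)\ge 0$ with equality only at $\ba^*$. You are in fact slightly more complete than the paper, which leaves the Player~II deviation side implicit, whereas you spell it out correctly via Lemmas~\ref{lemAnonymous} and \ref{lemNegativeK}.
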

\begin{proof}
This follows from the previous lemmas. WLOG $\sigma_x \le \sigma_y$. If Player II plays pure strategy $\bb^*$, then for any pure strategy $\ba=(x_1,y_1)$, $K(\ba,\bb^*)\ge 0$, and equality is only achieved when $\ba=\ba^*$. 
\end{proof}

\section{Variability of Issue-by-Issue and Whole Package Outcomes}
Having shown that under an $L_2$ distance criterion there is a unique pure optimal strategy pair, we consider the question of whether the issue-by-issue or whole-package variant is more in line with the aims of FOA. Since FOA makes arbitration a costly alternative by its inherent uncertainty, we may compare the uncertainty (i.e. variance) between equilibrium strategies under the two mechanisms. It may come as no surprise that the arbitrated outcome in WPFOA has a higher variance.
\begin{theorem}
Under $L_2$ criterion, the expected payoff is zero under either Issue-by-Issue rules or Whole-Package. If both players choose optimal strategies then the variances of the awards, respectively, are $\frac{\pi}{2}(\sigma_x^2+\sigma_y^2)$ and $\frac{\pi}{2}(\sigma_x^2+2\rho\sigma_x\sigma_y+\sigma_y^2)$.
\end{theorem}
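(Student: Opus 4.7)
The plan decomposes into three pieces: verifying the zero-mean claim under both mechanisms, and then computing each variance as the second moment of an explicit two-point random variable.

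For the expected payoffs, the WPFOA case is immediate from Lemma~\ref{valuezero}, which established that the value of the zero-sum whole-package game is zero. For IBIFOA, the model in Section~2 treats the arbiter's decisions as two independent single-issue Brams--Merrill games, with optimal offers $\pm\sigma_x\sqrt{2\pi}/2$ and $\pm\sigma_y\sqrt{2\pi}/2$ from (\ref{IBIFOAsol}). In each single-issue game, the two offers are symmetric about $0$, the arbiter's midpoint is $0$, and the marginal distribution of $\xi$ (resp.\ $\eta$) is symmetric about $0$, so each offer wins with probability $1/2$ and the expected award is $0$; summing over the two issues gives expected total $0$.

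For the IBIFOA variance, I would use exactly this two-point structure. The $x$-award takes values $\pm\sigma_x\sqrt{2\pi}/2$ with equal probability, hence has variance $(\sigma_x\sqrt{2\pi}/2)^2=\pi\sigma_x^2/2$; the $y$-award analogously contributes $\pi\sigma_y^2/2$. Because the games are treated as independently decided in Section~2, the variance of the combined award is the sum $\frac{\pi}{2}(\sigma_x^2+\sigma_y^2)$.

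For the WPFOA variance, write $L=\sqrt{2\pi(\sigma_x^2+2\rho\sigma_x\sigma_y+\sigma_y^2)}$, so that $x_1^*+y_1^*=-L/2$ and $x_2^*+y_2^*=L/2$. The only step requiring a moment's thought is identifying the probability that Player II's offer wins. Because $\ba^*=-\bb^*$, the perpendicular bisector of the segment from $\ba^*$ to $\bb^*$ passes through the origin; since $\bb^*-\ba^*$ points in the direction $(1,1)$, the midset $Mid(\ba^*,\bb^*)$ is exactly the line $\{x+y=0\}$ and $C_2(\ba^*,\bb^*)=\{x+y>0\}$. Since $\xi+\eta$ is a centered (univariate) normal, $P((\xi,\eta)\in C_2(\ba^*,\bb^*))=P(\xi+\eta>0)=1/2$. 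Therefore the arbitrated award is the two-point variable taking $\pm L/2$ with equal probability, with mean $0$ (confirming zero expected payoff directly) and variance $(L/2)^2=\frac{\pi}{2}(\sigma_x^2+2\rho\sigma_x\sigma_y+\sigma_y^2)$.

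There is no real obstacle here; the only point where one can slip is the WPFOA geometry, where one must notice that the joint normality of $(\xi,\eta)$ is irrelevant to the winning probability because the symmetry $\ba^*=-\bb^*$ forces the midset through the origin and the distribution of $(\xi,\eta)$ is symmetric about $\mathbf{0}$. Once that observation is in hand, both variance formulas follow by squaring the single nonzero value that the award can take.
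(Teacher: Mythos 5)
Your proposal is correct and follows essentially the same route as the paper: both treat each award as a symmetric two-point random variable with mean zero and compute the variance as the second moment, summing over the two issues by independence in the IBI case and using $P\big((\xi,\eta)\in C_2(\ba^*,\bb^*)\big)=\tfrac{1}{2}$ in the WP case. Your explicit justification of that $\tfrac{1}{2}$ via the midset $\{x+y=0\}$ passing through the origin is a detail the paper leaves implicit, but the substance is identical.
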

\begin{proof}
Under IBIFOA, since the components are awarded independently, the variance is 
\begin{alignat*}{2}Var(K) &=Var(K(x)+K(y))\\
&=E(K(x)^2)+E(K(y)^2)\\
&=\frac{1}{2}\left(2\frac{2\pi\sigma_x^2}{4}\right)+\frac{1}{2}\left(2\frac{2\pi\sigma_y^2}{4}\right)\\
&=\frac{\pi}{2}(\sigma_x^2+\sigma_y^2)
\end{alignat*}
Under WPFOA the variance is
\begin{alignat*}{2}Var(K)
&=E(K^2)\\
&=\frac{1}{2}(2x_1^*)^2+\frac{1}{2}(2x_2^*)^2\\
&=4x_2^{*2}\\
&=\frac{\pi}{2}(\sigma_x^2+2\rho\sigma_x\sigma_y+\sigma_y^2)
\end{alignat*}
\end{proof}
Thus we argue that quantitative issues should be arbitrated by package rather than independently to provide a stronger motivation to the parties to reach ``security in agreement'' \citep{stevens1966}. 

\section{Conclusions}
We have developed a model of two issue final-offer arbitration as a zero-sum game where both players are risk-neutral, issues under dispute are quantitative and the values are additive, the arbiter chooses a fair settlement from a bivariate normal distribution commonly known to both players and measures how `reasonable' a final-offer is by its $L_2$ distance from this reasonable settlement. We have shown, among other properties, that with reasonable assumptions the game has a value of zero. If the two components are not too negatively correlated, locally optimal pure strategies are derived. If we further assume that the issues are positively correlated, these represent the unique optimal strategy pair. Finally it was observed that in this case whole-package FOA leads to an outcome with greater variance than IBI, and would act as a greater motivator to reach agreement in negotiations.

This represents only an initial model of the multi-issue FOA game. Many variants are worthy of consideration. Firstly the arbiter may use one of any number of decision criteria including $L_1$ distance, $L_\infty$ distance, total absolute difference, and Mahalanobis distance. It may be the case that the final-offer vectors must be standardized before measuring distance, and Players valuation of a final-offer may be more complicated than the sum of the two components. Another obvious extension is to look at the $n$-issue game. 

Finally it worth considering an extension of final-offer arbitration to $n$-player games. This would have applications for inheritance splitting, for example, where the heirs cannot agree on a fair split and need to bring in an arbiter. To our knowledge, final-offer arbitration has not been used in this scenario but we feel it would be an effective means to encourage agreement among the players.

\section{Acknowledgments}
My sincere thanks to T.E.S. Raghavan for first suggesting this problem, for his keen interest,  encouragement, guidance and for many hours of discussion and review.

\bibliography{foa.bib}

\end{document}